\newtheorem{tm}{Theorem}
\newtheorem{defi}{Definition}
\newtheorem{rem}{Remark}
\newtheorem{lm}{Lemma}
\newtheorem{ex}{Example}
\newtheorem{prop}{Proposition}
\newtheorem{nota}{Notation}
\newtheorem{prob}{Problem}
\begin{document}
\title{Further than Descartes' rule of signs}
\author{Yousra Gati, Vladimir Petrov Kostov and Mohamed Chaouki Tarchi}

\address{Universit\'e de Carthage, EPT-LIM, Tunisie}
\email{yousra.gati@gmail.com}
\address{Universit\'e C\^ote d’Azur, CNRS, LJAD, France}
\email{vladimir.kostov@unice.fr}
\address{Universit\'e de Carthage, EPT-LIM, Tunisie}
\email{mohamedchaouki.tarchi@gmail.com}
\begin{abstract}
  The {\em sign pattern} defined by the real polynomial
  $Q:=\Sigma _{j=0}^da_jx^j$, $a_j\neq 0$, is the
  string $\sigma (Q):=({\rm sgn(}a_d{\rm )},\ldots ,{\rm sgn(}a_0{\rm )})$.
  The quantities
  $pos$ and $neg$ of positive and negative roots of $Q$ satisfy Descartes'
  rule of signs. A couple $(\sigma _0,(pos,neg))$, where $\sigma _0$ is a
  sign pattern of length $d+1$, is {\em realizable} if
  there exists a polynomial $Q$ with $pos$ positive and $neg$ negative
  simple roots, with $(d-pos-neg)/2$ complex conjugate pairs and with
  $\sigma (Q)=\sigma_0$. We present a series of couples (sign pattern,
  pair $(pos,neg)$) depending on two integer parameters and with
  $pos\geq 1$, $neg\geq 1$, which is not realizable.
  For $d=9$, we give the exhaustive list of realizable couples with
  two sign changes in the sign pattern.

  {\bf Key words:} real polynomial in one variable; hyperbolic polynomial;
  sign pattern; Descartes' rule of signs\\

{\bf AMS classification:} 26C10; 30C15
\end{abstract}
\maketitle

\section{Introduction}

This paper deals with a problem which is a natural extension of
Descartes' rule of signs. We consider univariate real polynomials
without vanishing
coefficients. About such a degree $d$ polynomial $Q$, Descartes' rule of
signs states that the number $pos$ of its positive roots is bounded by the
number $c$ of sign changes in the sequence of its coefficients, the
difference $c-pos$ being even, see \cite{Ca}, \cite{Cu}, \cite{DG},
\cite{Des}, \cite{Fo}, \cite{Ga}, \cite{J}, \cite{La} or \cite{Mes}.
When applied to the polynomial $Q(-x)$, this
rule yields the result that the number $neg$ of its negative roots is bounded
by the number $p$ of sign preservations and the difference $p-neg$
is also even. In the case of {\em hyperbolic} polynomials, i.e. real
polynomials with all roots real, one has $pos=c$ and $neg=p$. Descartes' rule
of signs proposes only necessary conditions. Our aim is to understand how far
from sufficient they can be.

In what follows we use the quantity
$\tilde{\mu}:=\min (pos, neg)$ and we denote by $\tilde{\lambda}$ the number of
complex conjugate pairs of roots of $Q$. Hence the quantities which we 
introduce 
satisfy the following relations in which $[.]$ denotes the integer part:

\begin{equation}\label{eqdefi}
  \begin{array}{ccc}pos\leq c~,&neg\leq p~,&\tilde{\lambda}=(d-pos-neg)/2~,\\ \\
    c-pos\in 2\mathbb{Z}~,&p-neg\in 2\mathbb{Z}~,&
    \tilde{\lambda}\leq [d/2]~,\\ \\ {\rm sgn}(Q(0))=(-1)^{pos}&{\rm and}&
    c+p=d~. 
    \end{array}
  \end{equation}

\begin{defi}
  {\rm (1) A {\em sign pattern} of length $d+1$ is a string of $d+1$ signs $+$
    and/or $-$. We say that the polynomial $Q:=\sum _{j=0}^{d}a_jx^j$
    defines the sign pattern $\sigma (Q):=({\rm sgn}(a_d)$, $\ldots$,
    ${\rm sgn}(a_0))$. Most often we deal with monic polynomials in which
    case the first sign of the sign pattern is a~$+$.
    \vspace{1mm}
    
    (2) Suppose that a sign pattern $\sigma _0$ is given, with $c$
    sign changes
    and $p$ sign preservations. An {\em admissible pair}
    (i.~e. a pair admissible for $\sigma _0$)
    is a pair $(pos, neg)$ satisfying conditions (\ref{eqdefi}). In this case
    we say that $(\sigma _0,(pos, neg))$ is a {\em compatible couple} (or just
    {\em couple} for short).
    \vspace{1mm}
    
    (3) We say that this compatible couple is {\em realizable}
    if there exists a
    degree $d$ real polynomial $Q$ with $\sigma (Q)=\sigma _0$, with exactly
    $pos$ positive and $neg$ negative roots, all of them distinct.
    \vspace{1mm}
    
    (4) The pair $(pos, neg)=(c,p)$, which is admissible for the sign pattern
    $\sigma _0$, is its
    {\em Descartes' pair}.}
  \end{defi}

We study the following problem:

\begin{prob}\label{prob1}
  For a given degree $d$, which compatible couples are realizable?
\end{prob}

In spite of its simple formulation the problem is not trivial at all.
Descartes' rule of signs gives only necessary conditions,
and Problem~\ref{prob1} is
a realization problem. For $d=1$, $2$ and $3$, all compatible couples
are realizable, but for $d=4$, the couple $((+,-,-,-,+),(0,2))$ is not
(see~\cite{Gr}). In fact, for $d=4$, this is the only non-realizable couple
up to the following equivalence.

\begin{defi}\label{defiZ2Z2}
  {\rm For a given degree $d$, the
    $\mathbb{Z}_2\times \mathbb{Z}_2$-{\em action} on the set
    of compatible couples is defined by means of two commuting involutions.
    The involution}

  \begin{equation}\label{eqim}
    i_m~:~(\sigma (Q), (pos, neg))\mapsto
    ((-1)^d\sigma (Q(-x)), (neg, pos))
    \end{equation}
  {\rm expresses the fact that the change of variable $x\mapsto -x$ changes
    the sign of every second coefficient and the signs of the real roots.
    The involution}

  \begin{equation}\label{eqir}
    i_r~:~(\sigma (Q), (pos, neg))\mapsto
    (\sigma (Q^R(x)/Q(0))(pos, neg))~,~~~\, Q^R(x):=x^dQ(1/x)
    \end{equation}
      {\rm is connected with the property the {\em reverted} polynomial $Q^R$
        (i.~e. $Q$ read from right to left) to have the same numbers
    of positive and negative roots as $Q$ (reversion changes the roots to
    their reciprocals). The normalizing factors $(-1)^d$ and
    $1/Q(0)$ are introduced to preserve the set of monic polynomials.
    The {\em orbit} of each compatible couple under this action
    consists either of $2$ or of $4$ couples which share the same values of
    $\tilde{\lambda}$ and $\tilde{\mu}$ and which are simultaneously
    realizable or not. This is why we use the same notation for couples and
    for their orbits. Sometimes we consider the orbits under the
    $\mathbb{Z}_2\times \mathbb{Z}_2$-action only of sign patterns,
    not of couples.}
  \end{defi}

\begin{nota}
  {\rm We denote by $\Sigma _{m_1,m_2,\ldots ,m_s}$, $m_k\in \mathbb{N}$,
    $m_1+\cdots +m_s=d+1$, the sign pattern beginning with a sequence of $m_1$
    signs $+$ followed by a sequence of $m_2$ signs $-$
    followed by a sequence
    of $m_3$ signs $+$ etc. Example:}
    $$(+,+,-,-,-,+,-,+,+,+)=\Sigma _{2,3,1,1,3}~.$$
  \end{nota}

\begin{ex}
  {\rm For $d=3$, the orbit of the sign pattern $\sigma ^{\diamond}:=(+,+,-,+)$
    consists of four sign patterns: $\sigma ^{\diamond}$,
    $i_r(\sigma ^{\diamond})=(+,-,+,+)$, $i_m(\sigma ^{\diamond})=(+,-,-,-)$
    and $i_r(i_m(\sigma ^{\diamond}))=(+,+,+,-)$. The orbit of the sign pattern
    $\Sigma _{1,d-1,1}$ ($d\geq 3$) consists of two sign patterns:}

    $$\Sigma _{1,d-1,1}=i_r(\Sigma _{1,d-1,1})~~~\, {\rm and}~~~\, 
    i_m(\Sigma _{1,d-1,1})=i_m(i_r(\Sigma _{1,d-1,1}))=\Sigma _{2,1,\ldots ,1,2}$$ 
    {\rm ($d-3$ units).
    For $d$ even, both
    $\Sigma _{1,d-1,1}$ and  $i_m(\Sigma _{1,d-1,1})$ are center-symmetric.
    For $d$ odd, $\Sigma _{1,d-1,1}$ is center-symmetric and
    $i_m(\Sigma _{1,d-1,1})$ is center-antisymmetric.}
\end{ex}

\begin{rem}{\rm For an orbit of length $2$, one has either
    $i_r(\sigma (Q))=\sigma (Q)$ or $i_r(i_m(\sigma (Q)))=\sigma (Q)$. One has
    always $i_m(\sigma (Q))\neq \sigma (Q)$, because the second components of
    these two sign patterns are different.}\end{rem}

Problem~\ref{prob1} is formulated for the first time in \cite{AJS}.
Up to the $\mathbb{Z}_2\times \mathbb{Z}_2$-action, for $d=5$ and $d=6$, 
there exist exactly $1$ and exactly $4$ non-realizable
orbits respectively,
see~\cite{AlFu}. For $d=7$ (see \cite{FoKoSh}) and $d=8$ (see \cite{FoKoSh}
and \cite{KoCzMJ}) these numbers equal $6$ and $19$. In all these cases
one has $\tilde{\mu}=0$ and the exhaustive answer to Problem~\ref{prob1}
is known.
Details are given in Section~\ref{secprelim}.

For hyperbolic polynomials, the answer to Problem~\ref{prob1}
is always positive, see \cite[Proposition~1]{KoSe}. In other words,
the orbit of any compatible couple in which the admissible pair is the
Descartes' pair
is realizable. A tropical analog of
Descartes' rule of signs is studied in~\cite{FoNoSh}.

The first examples of non-realizable orbits with $\tilde{\mu}=1$ are found
for $d=11$ in \cite{KoMB} and for $d=9$ in \cite{CGK}. For $d=9$, the orbit
studied in
\cite{CGK} is the only non-realizable one with $\tilde{\mu}=1$.
In this paper we present a series
of non-realizable orbits (depending on two integer
parameters) with $\tilde{\mu}=1$.
The series includes the examples for $d=11$ and
$d=9$ already found. Besides, for $d=10$ and $d=11$,
we give the list of the orbits
with $\tilde{\mu}=1$ which are either non-realizable or for which the answer to
Problem~\ref{prob1} is not known. We also show that
for $d\leq 14$, with the possible exception of one orbit with $d=14$,
there are no non-realizable orbits with $\tilde{\mu}\geq 2$. Finally,
we give the exhaustive answer to Problem~\ref{prob1} for $d=9$, $c=2$. 

\begin{defi}
  {\rm For $d\geq 9$, we denote by $\mathcal{K}_{n,q}$ the
    orbit $(\Sigma _{1,n,q,1},(1,d-3))$, $n\geq 4$, $q\geq 4$, 
    $n+q=d-1$, hence with Descartes' pair $(3,d-3)$. The orbits
    $\mathcal{K}_{n,q}$ and $\mathcal{K}_{q,n}$
    being the same one can assume that $n\leq q$.}
\end{defi}
  
\begin{tm}\label{tmmain}
  For $d\geq 9$, the orbit $\mathcal{K}_{n,q}$ is not realizable. 
\end{tm}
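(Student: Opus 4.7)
The plan is to argue by contradiction. Suppose a polynomial $Q$ of degree $d$ realizes $\mathcal{K}_{n,q}$, so $\sigma(Q) = \Sigma_{1,n,q,1}$ and $Q$ has one positive simple root $\rho$, exactly $d-3$ negative simple roots $-b_1,\ldots,-b_{d-3}$, and one complex conjugate pair. I factor $Q(x) = A(x)\,h(x)$, where $A(x) := (x-\rho)\prod_{j=1}^{d-3}(x+b_j)$ is the hyperbolic (real-rooted) factor of $Q$ and $h(x) := x^2 + \alpha x + \beta$ satisfies $\alpha^2 < 4\beta$ (so $\beta>0$). The strategy is to play $\sigma(A)$ against $\sigma(Q)$ via the identity for the coefficients of a product, in order to reach an inequality incompatible with the discriminant condition $\alpha^2 < 4\beta$.

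Since $A$ is monic, hyperbolic and has exactly one positive root, the Descartes equality for hyperbolic polynomials (cf.\ \cite{KoSe}) forces $\sigma(A) = \Sigma_{k,d-1-k}$ for some $k \in \{1,\ldots,d-2\}$. Writing $A = \sum_{j=0}^{d-2} a_j x^j$, one has $q_j = a_{j-2} + \alpha a_{j-1} + \beta a_j$ (with $a_j = 0$ for $j \notin \{0,\ldots,d-2\}$); this three-term identity is the workhorse of the argument. The known signs of the $a_j$'s from $\sigma(A)$ together with the imposed signs of the $q_j$'s from $\sigma(Q)$ (namely $q_{d-1},\ldots,q_{d-n} < 0$ and $q_{d-n-1},\ldots,q_1 > 0$, together with the automatic $q_d>0$, $q_0<0$) convert into a system of linear inequalities in $\alpha$ and $\beta$, parametrised by $k$ and by the coefficient ratios of $A$.

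The main task is to show this system is incompatible with $\alpha^2 < 4\beta$ for every admissible $k$. For each $k$, I expect to extract a \emph{lower bound} on $|\alpha|$ from the requirement that the term $\alpha a_{j-1}$ dominate $a_{j-2}+\beta a_j$ for the $n$ consecutive indices near the top of $Q$ at which $q_j$ is forced to be negative, and an \emph{upper bound} on $|\alpha|$ from the positive-sign constraints on the $q$ consecutive indices of the middle block, combined with the Newton-type inequalities satisfied by the hyperbolic polynomial $A$ (which control the ratios $a_{j-1}^2/(a_{j-2} a_j)$). Chaining these two kinds of bounds through the blocks of uniform sign in $\sigma(A)$ and $\sigma(Q)$ is intended to yield $\alpha^2 \geq 4\beta$, the desired contradiction.

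The principal obstacle is the case analysis on the flip position $k$ of $\sigma(A)$, coupled with the sign of $\alpha$ (equivalent to the sign of the real part of the complex pair of roots of $Q$). Different values of $k$ produce different alignments between the single flip of $\sigma(A)$ and the three flips of $\sigma(Q)$, and each case must be closed separately; a preliminary step should rule out the values of $k$ for which $\sigma(A)h$ already forces too few sign changes in $\sigma(Q)$. The hypotheses $n\geq 4$ and $q\geq 4$ are essential: they provide at least four consecutive indices of uniform sign in each middle block of $\sigma(Q)$, which is precisely the length needed to chain Newton's inequalities across the flip of $\sigma(A)$ without the bounds being spoiled by boundary terms near $a_{d-k-1}$ and $a_{d-k-2}$.
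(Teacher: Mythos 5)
Your setup is correct as far as it goes, and it is a genuinely different decomposition from the paper's: you keep the positive root inside the hyperbolic factor $A$ (so that $\sigma(A)$ has one sign change at an unknown position $k$) and work with the three-term identity $q_j=a_{j-2}+\alpha a_{j-1}+\beta a_j$ plus Newton's inequalities, whereas the paper writes $Q=(x^{d-3}+e_1x^{d-4}+\cdots+e_{d-3})(x^2-ux+v)(x-1)$ after normalizing the positive root to $1$, so that the hyperbolic factor has \emph{all} coefficients positive and only three of the sign constraints ($a_{d-1}<0$, $a_{d-4}<0$, $a_1>0$) are needed; these become half-planes in the $(u,v)$-plane, and the whole proof reduces to showing that certain intersection points of the lines $a_j=0$ lie below the parabola $v=u^2/4$, with a case split on the size of $e_1$ and a reduction of the small-$e_1$ case to the large-$e_1$ case via the reversal $i_r$. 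The problem is that your write-up stops exactly where the proof has to begin. The entire content of the theorem is the quantitative incompatibility between the sign constraints and $\alpha^2<4\beta$, and on this point you offer only intentions: ``I expect to extract a lower bound'', ``is intended to yield $\alpha^2\geq 4\beta$'', ``a preliminary step should rule out\ldots''. No inequality is derived, no value of $k$ is closed, and no mechanism is exhibited for how Newton's inequalities for $A$ are to be chained across the flip of $\sigma(A)$ against the three flips of $\sigma(Q)$. As it stands this is a research plan, not a proof.

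The missing step is also where all the difficulty is concentrated, because the statement is sharp in ways your chaining would have to detect: for $d=10$ the couple $(\Sigma _{1,4,4,2},(1,7))$, which differs from $\mathcal{K}_{4,5}=(\Sigma _{1,4,5,1},(1,7))$ only in its last entries, is left open by the paper, and $(\Sigma _{1,3,6,1},(1,7))$ \emph{is} realizable; so the bounds you hope for must fail for $n=3$ and must use $q_0<0$ essentially, and ``four is precisely the length needed'' is an assertion, not an argument. Two further points you would have to address: a hyperbolic polynomial with one positive and $d-3$ negative roots can have vanishing coefficients, so $\sigma(A)=\Sigma _{k,d-1-k}$ is not automatic and the Newton quotients $a_{j-1}^2/(a_{j-2}a_j)$ may be undefined; and you never exploit the scaling $x\mapsto hx$, which is what allows the paper to reduce to two free variables and makes its verification finite. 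To salvage your route you would need to produce, for each admissible $k$ and each sign of $\alpha$, the two explicit bounds on $\alpha$ and verify their incompatibility with $\alpha^2<4\beta$; until that is done the proof has a gap at its core.
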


The theorem is proved in Section~\ref{secprtmmain}.

\begin{tm}\label{tmmainbis}
  (1) For $d\leq 14$, with the possible exception of the orbit 
  $(\Sigma _{1,4,5,4,1},(2,10))$
  about which the answer to Problem~\ref{prob1} is not known, there are
  no non-realizable orbits with $\tilde{\mu}\geq 2$.
  \vspace{1mm}

  (2) For $d=10$, for the orbits $\mathcal{K}_{4,5}=(\Sigma _{1,4,5,1},(1,7))$
  and $(\Sigma _{1,4,4,2},(1,7))$, one has
  $(\tilde{\mu},\tilde{\lambda})=(1,1)$. The first
  of them is not realizable (see Theorem~\ref{tmmain}), for the second one
  the answer to Problem~\ref{prob1} is not known. All other orbits with
  $d=10$ and $\tilde{\mu}=1$ are realizable.
  \vspace{1mm}
%
    
    (3) For $d=11$, the following two orbits (both with
    $\tilde{\mu}=\tilde{\lambda}=1$)
    are not realizable:

    $$\mathcal{K}_{5,5}=(\Sigma _{1,5,5,1},(1,8))~~~\, {\rm and}~~~\, 
      \mathcal{K}_{4,6}=(\Sigma _{1,4,6,1},(1,8))~.$$
    For $d=11$, with the exception of these two and of the following six
    orbits
    $$\begin{array}{cl}
      (\Sigma _{1,4,5,2},(1,8))~,~~~(\tilde{\mu},\tilde{\lambda})=(1,1)~,&
      (\Sigma _{1,5,4,2},(1,8))~,~~~(\tilde{\mu},\tilde{\lambda})=(1,1)~,\\ \\ 
      (\Sigma _{2,4,4,2},(1,8))~,~~~(\tilde{\mu},\tilde{\lambda})=(1,1)~,&
      (\Sigma _{1,4,4,1,1,1},(1,6))~,(\tilde{\mu},\tilde{\lambda})=(1,2)~,\\ \\ 
      (\Sigma _{1,4,1,1,4,1},(1,6))~,~~~(\tilde{\mu},\tilde{\lambda})=(1,2)~,&
      (\Sigma _{1,3,1,1,1,1,3,1},(1,4))~,~~~(\tilde{\mu},\tilde{\lambda})=(1,3)~,
    \end{array}$$
    (about which the answer to
    Problem~\ref{prob1} is not known), all other non-realizable orbits
    are with $\tilde{\mu}=0$.
    \end{tm}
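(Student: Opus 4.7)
The statement has a uniform structure: for each specified pair $(d,\tilde\mu)$ one must classify all $\mathbb Z_2\times\mathbb Z_2$-orbits of compatible couples, confirm that the listed non-realizable ones are indeed non-realizable, quarantine the listed ``unknown'' orbits, and exhibit a realization for every remaining orbit. My plan therefore splits into an enumeration step, an obstruction step, and a construction step.

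\emph{Enumeration.} For each $d\in\{9,10,\ldots,14\}$ and each relevant value of $\tilde\mu$, I would list all sign patterns $\Sigma_{m_1,\ldots,m_s}$ with $m_1+\cdots+m_s=d+1$, pair each with every admissible $(pos,neg)$ from (\ref{eqdefi}) with $\tilde\mu\ge 1$ or $\tilde\mu\ge 2$ as required, and quotient by the action generated by $i_m,i_r$ from (\ref{eqim})--(\ref{eqir}). This produces a finite tabulated list; the bookkeeping is tedious but mechanical.

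\emph{Obstructions.} Every orbit on the ``not realizable'' list in parts (2) and (3) has the form $\mathcal K_{n,q}=(\Sigma_{1,n,q,1},(1,d-3))$ with $n+q=d-1$, so non-realizability follows directly from Theorem~\ref{tmmain}: for $d=10$ the only such orbit is $\mathcal K_{4,5}$, and for $d=11$ they are $\mathcal K_{5,5}$ and $\mathcal K_{4,6}$. Hence no further obstruction argument is needed in parts (2)--(3); and in part (1) no $\mathcal K_{n,q}$ lies in the range $\tilde\mu\ge 2$, so there is nothing to rule out beyond the explicit $d=14$ exception.

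\emph{Constructions.} The bulk of the work is producing realizing polynomials for every remaining orbit. The tools I would rely on are: (i) the hyperbolic realizability from \cite[Proposition~1]{KoSe}, which handles every Descartes pair $(pos,neg)=(c,p)$; (ii) the classifications for $d\le 8$ from \cite{AlFu}, \cite{FoKoSh}, \cite{KoCzMJ}, used as building blocks; (iii) concatenation-type lemmas of the form ``if $(\sigma',(pos',neg'))$ is realized by $Q_1$ in degree $d_1$ and $(\sigma'',(pos'',neg''))$ by $Q_2$ in degree $d_2$, then a product $Q_1(x)\cdot Q_2(\varepsilon x)$ with $\varepsilon$ small enough realizes the glued sign pattern with $(pos'+pos'',neg'+neg'')$,'' together with its reversed variant via $i_r$; and (iv) a perturbation principle whereby a hyperbolic polynomial with a prescribed sign pattern, when perturbed so as to fuse a positive-positive or negative-negative pair of simple roots into a complex conjugate pair, produces realizations of admissible pairs below the Descartes pair while preserving the sign pattern up to controllable sign checks. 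Applying (i)--(iv) systematically to the enumerated list, I would walk through each orbit and produce either an explicit polynomial or a short composition of the lemmas above.

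\emph{Main obstacle.} The hard part is not any single argument but the combinatorial coverage: one must verify that (i)--(iv) together exhaust every orbit in the table except the ones flagged as ``unknown,'' and that for those flagged orbits every available construction genuinely fails (otherwise they would be moved to the realizable list). In particular, for the six $d=11$ exceptions and the one $d=14$ exception, I would have to argue that none of the concatenation/perturbation templates produces the required coefficient signs, which requires checking the possible splittings of the sign pattern and of the admissible pair one by one. This case-by-case verification, carried out for every orbit in the enumeration, is the technical core of the proof.
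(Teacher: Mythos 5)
Your plan correctly identifies the three ingredients (enumeration, the obstruction from Theorem~\ref{tmmain}, and concatenation-based constructions), and your treatment of the non-realizable orbits in parts (2)--(3) matches the paper, which indeed just cites Theorem~\ref{tmmain} for $\mathcal{K}_{4,5}$, $\mathcal{K}_{5,5}$ and $\mathcal{K}_{4,6}$. But there is a genuine gap in your tool (iv). A general ``perturbation principle'' that fuses two real roots of the same sign into a complex conjugate pair \emph{while preserving the sign pattern} does not exist: the failure of exactly this kind of deformation is what non-realizability means, and if (iv) held as stated, every admissible pair below the Descartes pair would be realizable, contradicting Theorem~\ref{tmmain} and the lists (\ref{eqd4d5})--(\ref{eqd=8}). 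The paper uses such a perturbation only once, in a tightly controlled situation (part E of its proof: $Q_{\natural}+t$ with $Q_{\natural}$ of sign pattern $\Sigma_{3,4,4}$, where adding $t>0$ visibly cannot change any coefficient sign), and even there it must first import the realizability of $(\Sigma_{3,4,4},(0,9))$ from \cite[Theorem~4]{CGK1}. You cannot invoke (iv) as a generic step.

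The second problem is that your ``walk through each orbit'' step is not yet a proof strategy, because you give no mechanism that makes the case analysis finite in practice or that certifies the completeness of the exceptional lists. The paper's core device, which your plan omits, is the splitting analysis: write the couple as $\mathcal{K}_s\ast\mathcal{K}^s$ for each cut position $s$, and observe that a concatenation realization can fail only if one of the two factors is itself non-realizable. Since for $d\le 8$ all non-realizable couples are known and all have $\tilde\mu=0$ with admissible pairs of the form $(0,\cdot)$ (so the factor carrying the single positive root is always realizable), only a handful of sign patterns survive every cut; these are precisely the candidates that must then be realized by ad hoc concatenations or flagged as unknown. Similarly, part (1) is an induction on $d$ that peels off the last one to four signs and reduces to the $\tilde\mu\ge 2$ or $\tilde\mu=1$ classification in lower degree, with special handling exactly when the inner factor lands on $(\Sigma_{1,4,4,1},(1,6))$, $\mathcal{K}_{4,5}$ or $(\Sigma_{1,4,4,2},(1,7))$ -- this is where $(\Sigma_{1,4,5,4,1},(2,10))$ emerges as the unresolvable case. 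Without this reduction your enumeration of $\tilde\mu=1$ orbits for $d=10,11$ is unmanageably large and, more importantly, you have no argument that the six $d=11$ exceptions and the one $d=14$ exception are the \emph{only} orbits your constructions miss.
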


The theorem is proved in Section~\ref{secprtmmainbis}. Our next result
concerns orbits of sign patterns with two sign changes.
Sign patterns with none or one sign
change are realizable, see~\cite[Example~1 and Theorem~1]{CGK1},
and the first examples of
non-realizability are obtained for $c=2$, see the beginning of
Section~\ref{secprelim}. 

\begin{tm}\label{tm2SC}
  For $d=9$, the following orbits are not realizable:

  $$\begin{array}{l}
    (\Sigma _{1,8,1},(0,k))~,~~~\, k=3~,~5~~~\, {\rm and}~~~\, 7~;\\ \\
    (\Sigma _{1,7,2},(0,\ell ))~,~~~\, (\Sigma _{1,6,3},(0,\ell ))~~~\,
    {\rm and}~~~\, (\Sigma _{2,6,2},(0,\ell ))~,~~~\,
    \ell =5~~~\, {\rm and}~~~\, 7~;\\ \\ 
    (\Sigma _{1,5,4},(0,7))~,~(\Sigma _{1,4,5},(0,7))~,~
    (\Sigma _{2,4,4},(0,7))~,~(\Sigma _{2,5,3},(0,7))~~~\, {\rm and}~~~\,
    (\Sigma _{3,4,3},(0,7))~.
  \end{array}$$
  All other orbits $(\Sigma _{m,n,q},(pos,neg))$ with $m>0$, $n>0$, $q>0$,
  $m+n+q=10$, are realizable.
  \end{tm}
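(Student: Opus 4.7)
The plan is to split the proof into non-realizability of the $14$ listed orbits (all with $pos=0$) and realizability of the remaining compatible couples, treating the two halves by different methods.

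For the non-realizability half, I would write any hypothetical realization as $Q(x) = R(x)P(x)$, where $P(x) = \prod_{i=1}^{neg}(x+\tau_i)$, $\tau_i > 0$, carries the $neg$ negative roots and $R(x)$ is a product of $\tilde\lambda = (9-neg)/2$ monic quadratics $x^2 + \beta_k x + \gamma_k$ with $\beta_k^2 < 4\gamma_k$. Expanding gives each $q_j$ as an explicit polynomial in $(\beta_k, \gamma_k, p_l)$, where $p_l$ is the $l$th coefficient of $P$. The sign conditions on $q_j$ dictated by $\Sigma_{m,n,q}$, combined with the discriminant conditions $\beta_k^2 < 4\gamma_k$ and with Cauchy--Schwarz and Newton inequalities for the hyperbolic $P$, should yield a contradiction for each listed orbit. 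As a prototype, for $(\Sigma_{1,8,1},(0,7))$ one has $\tilde\lambda = 1$ and $R = x^2 + bx + c$; the conditions $q_1 < 0$ and $q_8 < 0$ read $b < -cp_1/p_0$ and $b < -p_6$, and combined with $b > -2\sqrt c$ they force $p_6/2 < \sqrt c < 2p_0/p_1$, whence $p_1 p_6 < 4 p_0$. But $P$ is monic of degree $7$, so $p_6 = \sum_i \tau_i$ and $p_1/p_0 = \sum_i 1/\tau_i$, and Cauchy--Schwarz gives $p_1 p_6 \ge 49\, p_0$, a contradiction. The other $13$ non-realizable orbits should succumb to the same recipe: pick the pair (or, for $neg\in\{3,5\}$, triple) of sign conditions at the indices adjacent to the transitions of $\Sigma_{m,n,q}$, reduce to an inequality on the $p_l$ (possibly modulated by the $\gamma_k$), and rule it out by a symmetric-function identity combined with the discriminant bound on each $x^2+\beta_k x+\gamma_k$.

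For realizability I would treat each remaining orbit by an explicit construction of a realizing polynomial of the form $\prod_k(x^2+\beta_k x+\gamma_k)\prod_i(x-\alpha_i)\prod_j(x+\tau_j)$ with parameters in generic position. A uniform device is to start from a hyperbolic polynomial realizing the Descartes pair $(2,7)$ for the target sign pattern $\Sigma_{m,n,q}$ --- which exists by \cite[Proposition~1]{KoSe} --- and continuously merge one or two pairs of like-sign real roots into complex-conjugate pairs, keeping the deformation small enough that no coefficient passes through $0$ (an open condition in coefficient space). The principal obstacle is the non-realizability argument for the cases with $neg\in\{3,5\}$: the extra quadratic factors of $R(x)$ couple the sign conditions nonlinearly, so one must select the right combination of extremal sign inequalities whose combination with the discriminant and symmetric-function bounds yields a clean contradiction, rather than relying on the two-endpoint trick that suffices for $neg = 7$.
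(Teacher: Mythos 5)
Your prototype for $(\Sigma_{1,8,1},(0,7))$ is correct and is a nice self-contained argument (the paper simply cites part (i) of Theorem~4 of Kostov--Shapiro for this case, and your symmetric-function computation is close in spirit to the paper's proof of Theorem~1). But the claim that "the other $13$ non-realizable orbits should succumb to the same recipe" is where the proof breaks down, and it breaks down precisely at the cases that constitute the new content of this theorem. First, for the $(0,7)$ patterns whose outer blocks both have length $\geq 2$ (e.g.\ $\Sigma_{3,4,3}$, $\Sigma_{2,4,4}$, $\Sigma_{2,5,3}$), the sign conditions at the two transitions combined with $b>-2\sqrt{c}$ only yield inequalities of the shape $p_k^2> p_{k-1}p_{k+1}$, which are \emph{consistent} with Newton's inequalities for a real-rooted $P$; no contradiction falls out, and the paper has to invoke separate results (\cite{CGK1}, Proposition~6 of \cite{FoKoSh}) for these. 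Second, and more seriously, the three couples $(\Sigma_{1,7,2},(0,5))$, $(\Sigma_{1,6,3},(0,5))$, $(\Sigma_{2,6,2},(0,5))$ have $\tilde{\lambda}=2$; the paper devotes all of Section~6 to them, via a degeneration argument (deform a hypothetical realizing polynomial to acquire a double positive root, classify the possible multiplicity vectors of the negative roots using Lemmas~3--5, and then exclude each configuration by Lagrange multipliers checked with computer algebra). Your one-sentence acknowledgment that "one must select the right combination of extremal sign inequalities" is not a proof; there is no evidence that any such combination closes these cases by hand, and the authors' resort to computer algebra suggests it does not.

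The realizability half also has a gap. Passing from the hyperbolic realization of the Descartes pair $(2,7)$ to a realization of, say, $(0,3)$ requires two pairs of real roots to collide and leave the real axis; this is \emph{not} achieved by a small deformation (you must cross the discriminant hypersurface), so the "open condition that no coefficient passes through $0$" does not apply along the path you actually need to traverse. Indeed, if such merging always worked, the $14$ listed couples would be realizable too. The paper avoids this entirely by building each realizable couple explicitly through concatenation (Lemma~1) of lower-degree realizable couples, supplemented by one direct numerical construction ($G_1$ for $(\Sigma_{1,7,2},(0,3))$) and, in one place, a controlled version of the root-merging idea (adding a positive constant $t$ to a polynomial with distinct critical values, which only perturbs $a_0$ and hence provably preserves the sign pattern). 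To repair your argument you would need either to adopt the concatenation scheme or to justify, case by case, a path in coefficient space along which the required root collisions occur before any coefficient vanishes.
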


The theorem is proved in Section~\ref{secprtm2SC}.

\section{Preliminaries\protect\label{secprelim}}

We list here the non-realizable orbits for $4\leq d\leq 8$. In all
cases one has $\tilde{\mu}=0$. For $d=4$ and $d=5$, 
the non-realizable orbits are:

\begin{equation}\label{eqd4d5}
 (\Sigma _{1,3,1},(0,2))~~~~\, {\rm and}~~~\, 
 (\Sigma _{1,4,1},(0,3))~.
  \end{equation}
For $d=6$, there are four such orbits:

\begin{equation}\label{eqd=6}
    (\Sigma _{1,5,1},(0,2))~,~~~\, (\Sigma _{1,5,1},(0,4))~,~~~\, 
    (\Sigma _{1,1,1,3,1},(0,2))~~~\, {\rm and}~~~\, (\Sigma _{2,4,1},(0,4))~.
  \end{equation}
For $d=7$, there are six non-realizable orbits:

\begin{equation}\label{eqd=7}
  \begin{array}{lll}
    (\Sigma _{2,5,1},(0,5))~,&(\Sigma _{2,4,2},(0,5))~,&
    (\Sigma _{3,4,1},(0,5))~,\\ \\ 
    (\Sigma _{1,4,1,1,1},(0,3))~,&
      (\Sigma _{1,6,1},(0,3))~,&(\Sigma _{1,6,1},(0,5))~. 
  \end{array}
  \end{equation}
For $d=8$, the non-realizable orbits are the following ones:

\begin{equation}\label{eqd=8}
  \begin{array}{l}
    \Sigma _{2,5,2}~,~~~\, \Sigma _{1,6,2}~,~~~\, \Sigma _{1,4,4}~,~~~\,
    \Sigma _{1,5,3}~~~\, {\rm and}~~~\, \Sigma _{2,4,3}~~~\,
           {\rm with}~~~\, (0,6)~,\\ \\ 
           \Sigma _{1,1,1,3,1,1,1}~~~\, {\rm and}~~~\, \Sigma _{1,3,1,1,1,1,1}~~~\,
                  {\rm with}~~~\, (0,2)~,\\ \\
                  \Sigma _{1,5,1,1,1}~~~\, {\rm and}~~~\, \Sigma _{1,3,1,3,1}~~~\,
                         {\rm with}~~~\, (0,2)~~~\, {\rm and}~~~\, (0,4)~,\\ \\
                         \Sigma _{1,7,1}~~~\, {\rm with}~~~\, (0,2)~,~~~\,
                         (0,4)~~~\, {\rm and}~~~\, (0,6)~,\\ \\ 
                         \Sigma _{1,4,1,2,1}~,~~~\,  \Sigma _{1,6,2}~,~~~\,
                         \Sigma _{1,4,2,1,1}~,~~~\, \Sigma _{1,1,1,4,2}
                         ~~~\, {\rm and}~~~\, \Sigma _{1,4,1,1,2}~~~\,
                  {\rm with}~~~\, (0,4)~.
    \end{array}
  \end{equation}

The following {\em concatenation lemma} (proved in \cite{FoKoSh})
is a basic tool for proving the relizability of certain orbits.

\begin{lm}\label{lmconcat}
Suppose that the
monic polynomials $P_1$ and $P_2$ of degrees $d_1$ and $d_2$, with
sign patterns represented in the form 
$(+,\sigma _1)$ and $(+,\sigma _2)$ respectively, realize
the pairs $(pos_1, neg_1)$ and $(pos_2, neg_2)$. Here $\sigma _j$
denotes what remains of the sign patterns when the initial sign $+$ is deleted.
Then

(1) if the last position of $\sigma _1$ is $+$, then for any $\varepsilon >0$
small enough, the polynomial $\varepsilon ^{d_2}P_1(x)P_2(x/\varepsilon )$
realizes the sign pattern $(+,\sigma _1,\sigma _2)$ and the admissible pair
$(pos_1+pos_2, neg_1+neg_2)$;

(2) if the last position of $\sigma _1$ is $-$, then for any $\varepsilon >0$
small enough, the polynomial $\varepsilon ^{d_2}P_1(x)P_2(x/\varepsilon )$
realizes the sign pattern $(+,\sigma _1,-\sigma _2)$ and the pair
$(pos_1+pos_2, neg_1+neg_2)$. Here $-\sigma _2$ is obtained from $\sigma _2$
by changing each $+$ by $-$ and vice versa.
\end{lm}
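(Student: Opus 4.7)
The admissible pairs for $\Sigma_{m,n,q}$ (which has $c=2$, $p=7$, and ends with a $+$) are $pos\in\{0,2\}$ and $neg\in\{1,3,5,7\}$ with $pos+neg\leq 9$; up to the $i_r$-action I may assume $m\leq q$. Listing all triples $(m,n,q)$ with $m+n+q=10$, $1\leq m\leq q$, and pairing them with the admissible pairs gives a finite list of couples to classify. The proof splits into (A) non-realizability of the fourteen listed orbits, all with $pos=0$, and (B) realizability of every other compatible couple.

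For part (B), the tool is the concatenation lemma (Lemma~\ref{lmconcat}) together with the unconditional realizability of hyperbolic couples \cite[Proposition~1]{KoSe}. Given a target $(\Sigma_{m,n,q},(pos,neg))$, I would write the sign pattern as $(+,\sigma_1,\pm\sigma_2)$ with $(+,\sigma_1)$ realized by a low-degree polynomial $P_1$ encoding part of the desired root count and $(+,\sigma_2)$ realized by a hyperbolic factor $(x+a)^j$, whose pure-$+$ sign pattern contributes $j$ more negative roots; applying Lemma~\ref{lmconcat} with small $\varepsilon$ produces the realizer. When $pos=2$, the two positive roots are absorbed into $P_1$. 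Running through the remaining orbits exhibits a realization in each case.

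For part (A) I would argue case by case using (i)~Descartes' rule applied to $Q$, to its derivatives $Q', Q''$ (whose sign patterns are read off from that of $Q$ by standard block-shrinking), and to quotients $Q(x)/(x+r)$ for $-r$ a negative root; (ii)~Rolle's theorem, forcing the derivatives to inherit real roots from those of $Q$; and (iii)~the non-realizability lists \eqref{eqd4d5}--\eqref{eqd=8} in degrees $4\leq d'\leq 8$, pulled back through differentiation or through division by a linear factor. For instance, for $(\Sigma_{1,8,1},(0,k))$ with $k\in\{3,5,7\}$, the derivative $Q'$ has sign pattern $\Sigma_{1,8}$, so at most one positive and at least $k-1$ negative real roots, and a parity/admissibility count forces the pair of $Q'$ into a non-realizable degree-$8$ orbit in \eqref{eqd=8}. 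Most of the fourteen orbits admit such a reduction, either by differentiation or by dividing $Q$ by a suitable linear factor.

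The main obstacle is the handful of orbits where the three-block structure persists under differentiation and no such reduction is available, most notably $(\Sigma_{3,4,3},(0,7))$, $(\Sigma_{2,4,4},(0,7))$ and $(\Sigma_{2,5,3},(0,7))$, all with $\tilde{\lambda}=1$. For these I expect to work directly with the factorization $Q(x)=(x^2+bx+c)\prod_{i=1}^{7}(x+r_i)$ where $b^2<4c$ and $r_i>0$, expanding the coefficients of $Q$ in the elementary symmetric functions $e_k=e_k(r_1,\ldots,r_7)$, and extracting the contradiction by evaluating $Q$ (and perhaps $Q'$) at well-chosen points such as $x=\pm 1$ against inequalities coming from $b^2<4c$ and Newton-type inequalities for the $e_k$. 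Once this technical core is in place, the remaining reductions in (A) and the enumeration in (B) complete the classification.
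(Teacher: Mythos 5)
Your proposal does not prove the statement it was supposed to prove. The statement is Lemma~\ref{lmconcat} itself (the concatenation lemma), whereas what you have written is a proof sketch of Theorem~\ref{tm2SC} --- the classification of the couples $(\Sigma_{m,n,q},(pos,neg))$ in degree $9$ --- in which Lemma~\ref{lmconcat} is invoked as a known tool (``the tool is the concatenation lemma''). Using the lemma as a black box inside an argument for a different theorem establishes nothing about the lemma: nowhere in your text is there any analysis of why $\varepsilon^{d_2}P_1(x)P_2(x/\varepsilon)$ defines the concatenated sign pattern and realizes the pair $(pos_1+pos_2,neg_1+neg_2)$. (For the record, the paper does not reprove the lemma either; it cites \cite{FoKoSh} for its proof.)

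What a proof actually requires is an elementary asymptotic analysis of the coefficients. Writing $P_2(y)=\sum_{i=0}^{d_2}b_iy^i$ with $b_{d_2}=1$, one has $\varepsilon^{d_2}P_2(x/\varepsilon)=\sum_{i=0}^{d_2} b_i\varepsilon^{d_2-i}x^i$, so the product $\varepsilon^{d_2}P_1(x)P_2(x/\varepsilon)$ tends, coefficientwise, to $x^{d_2}P_1(x)$ as $\varepsilon\to 0$; hence for $\varepsilon$ small the coefficients of $x^{d_2},\ldots,x^{d_1+d_2}$ carry the signs $(+,\sigma_1)$. For $k<d_2$, the coefficient of $x^k$ equals $P_1(0)\,b_k\,\varepsilon^{d_2-k}+o(\varepsilon^{d_2-k})$, so its sign is ${\rm sgn}(P_1(0))\cdot {\rm sgn}(b_k)$: this gives $\sigma_2$ when the last sign of $\sigma_1$, which is ${\rm sgn}(P_1(0))$, is $+$, and $-\sigma_2$ when it is $-$, which is exactly the dichotomy between parts (1) and (2). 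Finally, the roots of the product are the roots of $P_1$ together with $\varepsilon$ times the roots of $P_2$; for $\varepsilon$ small these two sets are disjoint, the real ones stay simple and keep their signs, and complex conjugate pairs stay complex, so the realized pair is $(pos_1+pos_2,neg_1+neg_2)$. None of these steps appears in your proposal, so as a proof of Lemma~\ref{lmconcat} it is missing in its entirety.
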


\begin{rem}
  {\rm We use the symbol $\ast$ to denote concatenation of couples or of
    sign patterns.
    Lemma~\ref{lmconcat} implies that
    when one concatenates the compatible realizable couples
    $(\Sigma _{m_1,\ldots ,m_s},(a,b))$ and $(\Sigma _{n_1,\ldots ,n_{\ell}},(c,d))$
    one obtains the realizable couple}
  $$(\Sigma _{m_1,\ldots ,m_{s-1},m_s+n_1-1,n_2,\ldots ,n_{\ell}},(a+c,b+d))=
  ((\Sigma _{m_1,\ldots ,m_s},(a,b))\ast (\Sigma _{n_1,\ldots ,n_{\ell}},(c,d))~.$$
  {\rm If one considers only sign patterns instead of couples, then
    one can write}
  $$\Sigma _{m_1,\ldots ,m_{s-1},m_s+n_1-1,n_2,\ldots ,n_{\ell}}=
  \Sigma _{m_1,\ldots ,m_s}\ast \Sigma _{n_1,\ldots ,n_{\ell}}~.$$
         {\rm When necessary we use more than two consecutive concatenations.}
\end{rem}

Consider a hyperbolic monic degree $d$ polynomial without vanishing
coefficients. Suppose that the moduli of
its roots are all distinct. Consider the order of these moduli on the real
positive half-axis. We note only at which positions the moduli
of its negative roots are; this should be clear from the following example.

\begin{ex}
  {\rm The polynomial}

  $$(x-1)(x+2)(x-3)(x-4)(x+5)(x-6)(x+7)(x+8)(x-9)$$
  {\rm has five
    positive and four negative roots. We note the relative
    positions of the moduli of its positive
    and negative roots by the letters $P$ and $N$. The order of the
    moduli of the roots of the polynomial is}
  $$|1|<|-2|<|3|<|4|<|-5|<|6|<|-7|<|-8|<|9|$$
  {\rm which we note as}
  $$P<N<P<P<N<P<N<N<P~.$$
\end{ex}

Given a sign pattern $(\alpha _d,\alpha _{d-1},\ldots ,\alpha _0)$,
$\alpha _j=\pm 1$,
one can construct a hyperbolic
degree $d$ polynomial defining
this sign pattern using Lemma~\ref{lmconcat}. At the first step one constructs
the linear polynomial $P_+:=x+1$ if $\alpha _d=\alpha _{d-1}$ or $P_-:=x-1$ if
$\alpha _d=-\alpha _{d-1}$. At each next step one concatenates the previously
obtained polynomial (which plays the role of $P_1$ and which defines the sign
pattern $(\alpha _d,\alpha _{d-1},\ldots ,\alpha _j)$) and the polynomial $P_+$
or $P_-$ as $P_2$ depending on whether $\alpha _j=\alpha _{j-1}$ or
$\alpha _j=-\alpha _{j-1}$ respectively.

Hence the modulus of each next root
which is added during this construction is much smaller than the moduli of the
roots previously obtained. In the end the roots of the constructed
polynomial define the {\em canonical order} corresponding to the given sign
pattern: one reads the sign pattern
from the right, to each consecutive equal (resp. different)
signs of coefficients
puts in correspondence the letter $N$ (resp. $P$) and then inserts between
any two consecutive letters the sign $<$. E.~g. the sign pattern
$(+,-,-,+,-,+,+,+,-)$ defines the canonical order

$$P<N<N<P<P<P<N<P~.$$
Thus for every sign pattern, there exists a hyperbolic polynomial the moduli
of whose roots define the corresponding canonical order,
see~\cite[Proposition~1]{KoSe}.

\section{Proof of Theorem~\protect\ref{tmmain}\protect\label{secprtmmain}}

\begin{proof}
  A) Suppose that a couple $\mathcal{K}_{n,q}$ is realizable by some polynomial
  $Q$.
  Using if necessary a linear transformation $x\mapsto hx$, $h>0$, one can
  assume that one of the roots of $Q$ is at~$1$. (We remind that $Q$ has three
  sign changes in the sequence of its coefficients, so by Descartes' rule
  of signs $Q$ has either
  $3$ or $1$ positive roots.) We denote by $-x_i$ the
  negative roots of $Q$, $0<x_1\leq x_2\leq \cdots \leq x_{d-3}$, and we set

  $$\sum _{j=1}^da_jx^j=:Q:=
  (x^{d-3}+e_1x^{d-4}+\cdots +e_{d-4}x+e_{d-3})(x^2-ux+v)(x-1)~,$$
  where $e_k$ are the elementary symmetric polynomials of the
  quantities~$x_i$. Hence $e_k>0$ and $a_d=1$. We explicit
  some of the first and some of the last coefficients~$a_j$:

  \begin{equation}\label{eqlist}
    \begin{array}{lrrrrrr}
      a_{d-1}&=&(e_1-1)&-&u~,&&\\ \\
      a_{d-2}&=&(e_2-e_1)&-&(e_1-1)u&+&v~,\\ \\
      a_{d-3}&=&(e_3-e_2)&-&(e_2-e_1)u&+&(e_1-1)v~,\\ \\
      a_{d-4}&=&(e_4-e_3)&-&(e_3-e_2)u&+&(e_2-e_1)v~,\\ \\ 
      a_3&=&(e_{d-3}-e_{d-4})&-&(e_{d-4}-e_{d-5})u&+&(e_{d-5}-e_{d-6})v~,\\ \\ 
      a_2&=&-e_{d-3}&-&(e_{d-3}-e_{d-4})u&+&(e_{d-4}-e_{d-5})v~,\\ \\ 
      a_1&=&&&e_{d-3}u&+&(e_{d-3}-e_{d-4})v~,\\ \\ 
      a_0&=&&&&&-e_{d-3}v~.
    \end{array}
  \end{equation}

  B) we introduce some notation:
  
  \begin{nota}
  {\rm In the plane of the variables $(u,v)$ we consider the parabola
  $\mathcal{P}:v=u^2/4$ and the straight lines $(a_j)$ defined by the
  respective equations of the form $a_j=\cdots$ from the list (\ref{eqlist}).
  We set}
  $$\begin{array}{lll}
    c_-:=23-4\sqrt{30}=1.09\ldots~,&c_+:=23+4\sqrt{30}=44.90\ldots ~,&
  I:=(c_-,c_+)~,\\ &&{\rm (see~(\ref{eq49})}\\ E_1:=\sum _{\nu =1}^{d-3}1/x_{\nu}~,&
  E_2:=\sum _{1\leq i<j\leq d-3}1/(x_ix_j)~.&\end{array}$$
  {\rm Hence $e_{d-4}=e_{d-3}\cdot E_1$ and
 $e_{d-5}=e_{d-3}\cdot E_2$.}
  \end{nota}

  \begin{rem}
    {\rm Points above (resp. below) the parabola $\mathcal{P}$
      correspond to polynomials $x^2-ux+v$ having two complex conjugate (resp.
      two real distinct) roots. Any polynomial from the parabola has
      a double real root. Polynomials between the parabola and the $u$-axis
      have two positive roots if $u>0$ and two negative roots if $u<0$.
      Polynomials below the $u$-axis have two
      roots of opposite signs. Our aim is to show that the domain defined by
      the inequalities of the form $a_j>0$ or $a_j<0$ resulting from the sign
      pattern $\Sigma _{1,n,q,1}$ does not intersect the domain $\{ v>u^2/4\}$.
    This is why in what follows we assume that $v>0$.}
  \end{rem}
  
  We consider first the case $e_1>c_-$. As $e_1>1$ and $a_{d-1}<0$, one has
  $u>0$ (see (\ref{eqlist})).

  For $e_1\in I$, from the definition of $e_1=x_1+\cdots +x_{d-3}$
  and $E_1$ follows that

  \begin{equation}\label{eqeE}
    \left\{ \begin{array}{l}
      e_1\cdot E_1\geq (d-3)^2\geq 49~~~\, {\rm hence}~~~\,
      E_1\geq 49/c_+>1~~~\, {\rm and}\\ \\ e_{d-4}=e_{d-3}\cdot E_1>e_{d-3}~.
      \end{array} \right. 
    \end{equation}

  \begin{lm}\label{lmI}
    (1) For $e_1\in I$, the intersection point $S$ of the straight lines
    $(a_1)$ and $(a_{d-1})$
    is below the parabola $\mathcal{P}$. 

    (2) Suppose that $e_1\geq c_+$ and that $x_1\leq 1$. Then the 
    point $S$ 
    is below the parabola~$\mathcal{P}$.
    \end{lm}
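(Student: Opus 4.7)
The plan is to compute the coordinates of the intersection $S=(u_S,v_S)$ explicitly and then compare $v_S$ with $u_S^2/4$. From the expression for $a_{d-1}$ in (\ref{eqlist}) the line $(a_{d-1})$ is vertical with equation $u=e_1-1$. Substituting $e_{d-4}=e_{d-3}E_1$ into the expression for $a_1$ shows that the line $(a_1)$ passes through the origin with equation $u=(E_1-1)v$. Hence
\[ u_S=e_1-1, \qquad v_S=\frac{e_1-1}{E_1-1}. \]
Under the standing hypotheses $e_1>c_->1$ and $E_1>1$ (the latter by (\ref{eqeE})), the inequality $v_S<u_S^2/4$ is equivalent to
\[ (e_1-1)(E_1-1)>4. \qquad (\ast) \]

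For part (1), I would use the bound $E_1\geq 49/e_1$ from (\ref{eqeE}). Substituting it into $(\ast)$ yields the sufficient condition $(e_1-1)(49-e_1)/e_1>4$, which simplifies to $e_1^2-46e_1+49<0$. The roots of this quadratic are exactly $23\pm 4\sqrt{30}=c_\pm$, so the inequality holds precisely when $e_1\in I$, which is the claim.

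For part (2), when $e_1\geq c_+$ the previous bound is too weak and the argument just sketched breaks down, so I would exploit the additional hypothesis $x_1\leq 1$ to improve the estimate on $E_1$. Singling out $x_1$ and applying Cauchy--Schwarz to the remaining $d-4$ roots $x_2,\ldots,x_{d-3}$ gives $(e_1-x_1)(E_1-1/x_1)\geq (d-4)^2$, whence
\[ E_1\geq \frac{1}{x_1}+\frac{(d-4)^2}{e_1-x_1}\geq 1+\frac{(d-4)^2}{e_1}, \]
the last step using $x_1\leq 1$ and $e_1-x_1<e_1$. Plugging into $(\ast)$ gives $(e_1-1)(E_1-1)\geq (1-1/e_1)(d-4)^2$, which for $e_1\geq c_+\approx 44.9$ and $d\geq 9$ (so that $(d-4)^2\geq 25$) is comfortably larger than $4$.

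The main obstacle I anticipate is the tightness of the interval in part (1): the bounds $c_\pm$ are exactly the roots of the quadratic arising from the Cauchy--Schwarz estimate of (\ref{eqeE}), so no slack remains and one must use the sharp form of the inequality. For part (2), should the crude bound $1+(d-4)^2/e_1$ prove insufficient in some borderline regime, the sharper form $1/x_1+(d-4)^2/(e_1-x_1)$ combined with an elementary optimization over $x_1\in(0,1]$ would close the gap.
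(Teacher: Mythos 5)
Your proposal is correct and follows essentially the same route as the paper: it computes $S=(e_1-1,\,(e_1-1)/(E_1-1))$, reduces the claim to $(e_1-1)(E_1-1)>4$ (the paper's equivalent form is $e_1+3<(e_1-1)E_1$), and uses the same AM--HM/Cauchy--Schwarz bounds $E_1\geq 49/e_1$ for part (1) and $E_1\geq 1+(d-4)^2/e_1$ (obtained by singling out $x_1\leq 1$) for part (2). The only difference is cosmetic: the paper finishes part (2) by checking $36/e_1>4/(e_1-1)$ rather than your direct estimate $(1-1/e_1)(d-4)^2>4$.
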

  
  \begin{proof}[Proof of Lemma~\ref{lmI}]
      Part (1). One has (see (\ref{eqlist}))

      $$(a_1)\cap (a_{d-1})~=~S~:=~(~e_1-1~,~e_{d-3}(e_1-1)/(e_{d-4}-e_{d-3})~)~$$
      and $e_{d-4}>e_{d-3}$, see (\ref{eqeE}).
      The point $S$ is below the parabola $\mathcal{P}$. This follows from

      $$e_{d-3}(e_1-1)/(e_{d-4}-e_{d-3})<(e_1-1)^2/4$$
      which is equivalent to

      \begin{equation}\label{eqed34}
        e_{d-3}(e_1+3)<e_{d-4}(e_1-1)
      \end{equation}
      or to $e_1+3<(e_1-1)E_1$. However (see (\ref{eqeE}))

      $$(e_1-1)E_1\geq (e_1-1)(49/e_1)=49-49/e_1>e_1+3~;$$
      the second of these inequalities is equivalent to

      \begin{equation}\label{eq49}
        e_1^2-46e_1+49=(e_1-c_-)(e_1-c_+)<0~,~~~\, 
        {\rm i.~e.~to}~~~\, e_1\in I~.
        \end{equation}

      Part (2). For fixed sum $x_2+\cdots +x_{d-3}$, the sum
      $\sum _{\nu =2}^{d-3}1/x_{\nu}$ is minimal if $x_2=\cdots =x_{d-3}$. Hence for
      $x_1\leq 1$, one has

      $$E_1\geq 1+(d-4)/(e_1/(d-4))=1+(d-4)^2/e_1\geq 1+36/e_1~,$$
      so again $e_{d-4}>e_{d-3}$. 
      Inequality (\ref{eqed34}) can be given the equivalent form

      $$1+4/(e_1-1)<e_{d-4}/e_{d-3}=E_1~.$$
      However the inequalities $E_1\geq 1+36/e_1>1+4/(e_1-1)$ hold
      true for $e_1\geq c_+$
      from which part (2) of the lemma follows.
    \end{proof}

  There exist no couples $\mathcal{K}_{n,q}$ satisfying the assumptions of
  Lemma~\ref{lmI} since the lemma implies that 
  the domain defined by the
  inequalities

  $$\begin{array}{lll}
    v>u^2/4~,&&a_1=e_{d-3}u+(e_{d-3}-e_{d-4})v>0~~~\, {\rm and}\\ \\ 
  a_{d-1}=e_1-1-u<0~,&{\rm i.~e.}&u>e_1-1~,\end{array}$$
  is void. Indeed, the straight line $(a_1)$ has a positive slope, see
  (\ref{eqeE}); it intersects the parabola $\mathcal{P}$ at the origin and
  at a point with $u>0$. Hence the whole sector $\{ a_1>0,~u>e_1-1\}$ is below
  the parabola~$\mathcal{P}$. 
\vspace{1mm}

C) Suppose that $x_{\nu}\geq 1$, $1\leq \nu \leq d-3$, so $e_1>c_-$.
We consider the intersection point 

$$T~:=~(a_{d-4})\cap (a_{d-1})~=(~e_1-1~,~
((e_3-e_2)(e_1-1)-(e_4-e_3))/(e_2-e_1)~)~.$$
Observe first that for $d\geq 10$ and $x_{\nu}\geq 1$, one has $e_2-e_1>0$
and $e_3-e_2>0$.
We show that the point $T$ is below the parabola $\mathcal{P}$. The straight
line $(a_{d-4})$ has a positive slope. Hence the domain defined by the three
inequalities 

$$\begin{array}{lll}
  v>u^2/4~,&&a_{d-1}=e_1-1-u<0~~~\, {\rm and}\\ \\
  a_{d-4}<0~,&{\rm i.~e.}&v<((e_3-e_2)u-(e_4-e_3))/(e_2-e_1)~,\end{array}$$
is void, so there exist no couples $\mathcal{K}_{n,q}$ with $e_1\geq c_+$ and
$x_{\nu}\geq 1$.

The point $T$ is under the parabola $\mathcal{P}$ exactly when

$$\begin{array}{l}
  ((e_3-e_2)(e_1-1)-(e_4-e_3))/(e_2-e_1)<(e_1-1)^2/4~,~~~\, {\rm i.~e.}\\ \\
\Psi:=4(e_3-e_2)(e_1-1)-4(e_4-e_3)-(e_1-1)^2(e_2-e_1)<0~.
\end{array}$$
We consider the quantity $\Psi$ as a function of one of the variables $x_{\nu}$
(say, $x_1$) when the other variables $x_{\nu}$ are fixed. We denote here
$de_j/dx_1$ by $e_j'$. Clearly $e_j'=f_{j-1}$, where $f_{j-1}$ is the $(j-1)$st
elementary symmetric polynomial of the quantities $x_2$, $\ldots$, $x_{d-3}$;
we set $e_0=f_0=1$.
Thus $e_j=f_j+x_1f_{j-1}$, 

$$\begin{array}{l}
  \Psi =e_1^3-e_1^2e_2-2e_1^2-2e_1e_2+4e_1e_3+e_1+3e_2-4e_4~~~\, {\rm and}\\ \\
\Psi '=3e_1^2-2e_1e_2-e_1^2f_1-4e_1-2f_1e_1-2e_2+4f_2e_1+4e_3+1+3f_1-4f_3~.
  \end{array}$$
Substituting $f_j+x_1f_{j-1}$ for $e_j$ in $\Psi '$ gives

$$\begin{array}{cclcl}\Psi '&=&-(f_1+3x_1-1)(f_1^2+x_1f_1-2f_2-x_1+1)&&\\ \\ &=&
-(f_1+3x_1-1)(x_1(f_1-1)+f_1^2-2f_2+1)&<&0~,\end{array}$$
because $f_1>1$ and $f_2<f_1^2/2$. Thus $\Psi '<0$. Hence if one considers
$\Psi$ as a function in all the variables $x_j$, one finds that
$\partial \Psi /\partial x_j<0$. Hence $\Psi$ takes its maximal value for
$x_1=\cdots =x_{d-3}=1$. In this case $e_j={d-3 \choose j}$ and

$$\Psi =-(d-2)(d-3)(d-4)/2<0~.$$
Thus for $x_{\nu}\geq 1$, one has $\Psi <0$, $e_1>c_-$ and
the point $T$ is below the parabola $\mathcal{P}$.
\vspace{1mm}

D) Up to now we showed that there are no couples $\mathcal{K}_{n,q}$
for $e_1>c_-$.
For $e_1\in I$, this was deduced from part (1) of Lemma~\ref{lmI}; for
$e_1\geq c_+$, this follows from part (2) of Lemma~\ref{lmI} (if $x_1\leq 1$)
and from C) (if $x_1>1$).
Suppose now that $0<e_1\leq c_-$. The involution $i_r$ (see (\ref{eqir}))
transforms the
polynomial $Q$ into a polynomial with sign pattern $\Sigma _{1,q,n,1}$ and with
$e_1>c_-$; the factor $x-1$ is preserved and each of the other two factors
of $Q$ is replaced by a factor of the same form.
For $e_1\leq c_-$, at least $d-4$ of the quantities $x_{\nu}$
are $<1$, so after applying the involution $i_r$ they become $1/x_{\nu}>1$
and $e_1$
becomes larger than $d-4\geq 6$. Thus the proof of
Theorem~\ref{tmmain} for $0<e_1\leq c_-$ results directly from its proof for
$e_1>c_-$.
    
  \end{proof}

\section{Proof of Theorem~\protect\ref{tmmainbis}
  \protect\label{secprtmmainbis}}



A) The proof of Theorem~\ref{tmmainbis} is organised as follows. 
The proof of part (1) is given 
for each degree from $10$ to $14$ in B) -- F)
respectively. Parts B) and C) of the proof are subdivided into B1), C1)
(in which we prove part (1) of the theorem for $d=10$ and $d=11$) and B2), C2)
containing the proofs of parts (2) and (3) of the theorem. 
Using the $\mathbb{Z}_2\times \mathbb{Z}_2$-action
(see Definition~\ref{defiZ2Z2})
one can assume that $pos\leq neg$. 
We remind that:
\vspace{1mm}

(i) For $d\leq 8$,
there are no non-realizable orbits with $\tilde{\mu}\geq 1$
(see \cite{AlFu}, \cite{FoKoSh} and~\cite{KoCzMJ}).
\vspace{1mm}

(ii) For $d=9$, the only
non-realizable orbit with $\tilde{\mu}\geq 1$ is (see~\cite{CGK})

\begin{equation}\label{eqcased9}
  (\Sigma _{1,4,4,1},(1,6))~,~~~{\rm with}~~~\, \tilde{\mu}=1~~~\, {\rm and}~~~\,
  \tilde{\lambda}=1~.
\end{equation}

(iii) If the admissible pair of a given orbit is
$(0,0)$, $(0,1)$, $(1,0)$ or $(1,1)$, then the orbit is realizable. Indeed, if
the admissible pair equals $(0,0)$ or $(0,1)$ (resp. $(1,0)$), then one chooses
a polynomial with the given sign pattern $\sigma$
and adds to it a sufficiently large
positive (resp. negative) constant. If the admissible pair is $(1,1)$, then
one represents the sign pattern in the form
$\sigma =(\sigma _{\dagger},\alpha ,\beta )$, where
$\alpha$ and $\beta$ are its last two signs of $\sigma$.
If $\alpha =\beta$ (resp.
$\alpha =-\beta$), then one
uses the concatenation
$(\sigma _{\dagger},\alpha ),(1,0))\ast (\Sigma _{2},(0,1))$
(resp. $(\sigma _{\dagger},\alpha ),(0,1))\ast (\Sigma _{1,1},(1,0))$). 
\vspace{1mm}

B) Suppose that $d=10$.
\vspace{1mm}

B1) Consider any compatible couple
$\mathcal{K}_{\flat}:=(\sigma _{\flat},(pos, neg))$
with $2\leq pos \leq neg$. Represent the sign pattern $\sigma _{\flat}$
as above in the form
$(\sigma _{\dagger},\alpha ,\beta )$.

If $\alpha \neq \beta$ and the couple
$\mathcal{K}_{\triangle}:=((\sigma _{\dagger},\alpha ), (pos-1, neg))$ is realizable,
then the couple $\mathcal{K}_{\flat}$ is also realizable as
$\mathcal{K}_{\triangle}\ast (\Sigma _{1,1},(1,0))$.

If $\alpha =\beta$ and the couple
$\mathcal{K}_{\diamond}:=((\sigma _{\dagger},\alpha ), (pos, neg-1))$ is realizable,
then one can realize $\mathcal{K}_{\flat}$ as 
$\mathcal{K}_{\diamond}\ast (\Sigma _{2},(0,1))$. However the couple
$\mathcal{K}_{\diamond}$
is realizable. Indeed, it is with $d=9$ and either with $\tilde{\mu}=1$ when
$pos=neg=2$ (and there
exist no such non-realizable couples) or with $\tilde{\mu}\geq 2$
when $neg\geq 3$,
hence again realizable. So the only situation in which
one does not know whether the couple $\mathcal{K}_{\flat}$ is realizable or not
is when

$$\alpha \neq \beta ~~~\, {\rm and}~~~\,
\mathcal{K}_{\triangle}=\mathcal{K}_{4,4}=(\Sigma _{1,4,4,1},(1,6))~.$$
This means that $\mathcal{K}_{\flat}=(\Sigma _{1,4,4,1,1},(2,6))$
which is realizable as 
$(\Sigma _{1,4,4},(2,6))\ast (\Sigma _{1,1,1},(0,0))$. Hence all
couples with $d=10$ and $\tilde{\mu}\geq 2$ are realizable.
\vspace{1mm}

B2) We need some more notation:

\begin{nota}\label{notaK}
  {\rm We denote by $\mathcal{K}_{\bullet}$ a couple with $\tilde{\mu}=1$ and by
    $\sigma _{\bullet}:=(\alpha _d,\ldots ,\alpha _0)$ its sign pattern, where
    $\alpha _j=+$ or
    $-$, $\alpha _d=+$. We set
    $\sigma ^s:=\alpha _s(\alpha _s,\ldots ,\alpha _0)$ and
    $\sigma _s:=(\alpha _d,\ldots ,\alpha _s)$. We discuss the possibility
    to realize $\mathcal{K}_{\bullet}$ as $\mathcal{K}_s\ast \mathcal{K}^s$,
    where the couples $\mathcal{K}_s$, $\mathcal{K}^s$ correspond to degree
    $d-s$ and $s$ polynomials with sign patterns $\sigma _s$ and $\sigma ^s$.}
  \end{nota}

\begin{rem}\label{remK}
  {\rm If $\alpha _s=+$, then the admissible pair of $\mathcal{K}_s$ (resp.
    of $\mathcal{K}^s$) is of the form $(0,.)$ (resp. $(1,.)$). If
    $\alpha _s=-$, then these admissible pairs are
    of the form $(1,.)$ and $(0,.)$ respectively. We remind that for
    $d\leq 8$, the admissible pairs of the form $(1,.)$ are realizable, see the
  beginning of Section~\ref{secprelim} and part A) of this proof.}
\end{rem}

Suppose first that $s=5$ and $\alpha _5=+$.
Then the couple $\mathcal{K}^5$ is realizable.
If $\mathcal{K}_5$ is realizable, then such is $\mathcal{K}_{\bullet}$ as well.
The only
possibility $\mathcal{K}_5$ not to be realizable is when
$\mathcal{K}_5=(\Sigma _{1,4,1},(0,3))$, see (\ref{eqd4d5}). So we assume that 
$\sigma _{\bullet}=(+,-,-,-,-,+,?,?,?,?,-)$. If $\alpha _4=-$,
then the couple $\mathcal{K}_4$
is realizable and $\mathcal{K}^4$ is not realizable only when
$\mathcal{K}^4=(\Sigma _{1,3,1},(0,2))$. This means that
$\sigma _{\bullet}=\Sigma _{1,4,1,1,3,1}$, with Descartes' pair $(5,5)$. This
sign pattern is realizable with the admissible pairs $(1,a)$, $a=3$ or~$5$:

\begin{equation}\label{eqrealiz}
  (\Sigma _{1,4,1,1,3,1},(1,a))=((\Sigma _{1,3},(1,a-3))~\ast ~
  (\Sigma _{2,1,1,3,1},(0,3))~.
  \end{equation}
If $\alpha _5=\alpha _4=+$, then $\mathcal{K}^4$ is realizable whereas
the couple
$\mathcal{K}_4$ is not realizable only when it corresponds to one of the
four cases
listed in~(\ref{eqd=6}). In each of them one has to take into account the
involution $i_r$ (see (\ref{eqir})), but not the involution $i_m$,
because the latter makes the first component of the admissible pair
larger than~$1$. Hence $\sigma _4=\Sigma _{1,4,2}$. Then we consider
the couples $\mathcal{K}_3$ and $\mathcal{K}^3$. The latter
is always realizable, so we assume that this is not the case of $\mathcal{K}_3$.
Hence $\sigma _3$ is obtained from $\sigma _4$ by adding a sign $+$
to the right, i.e. $\sigma _3=\Sigma _{1,4,3}$.

If $\sigma _2=\Sigma _{1,4,3,1}$, then both $\mathcal{K}_2$ and $\mathcal{K}^2$
are realizable, so we need to consider only the possibility
$\sigma _2=\Sigma _{1,4,4}$. Thus in the end
$\sigma _{\bullet}=\Sigma _{1,4,5,1}$ or $\Sigma _{1,4,4,2}$. 
These two sign patterns are realizable with the admissible pairs $(1,a)$,
$a=3$ and~$5$:

$$\begin{array}{l}
  (\Sigma _{1,4,5,1},(1,a))=(\Sigma _{1,4,1},(0,1))~\ast ~
(\Sigma _{5,1},(1,a-1))~,\\ \\ 
(\Sigma _{1,4,4,2},(1,a))=(\Sigma _{1,4,1},(0,1))~\ast ~
(\Sigma _{4,2},(1,a-1))~.\end{array}$$
The first of them is not realizable with the admissible pair $(1,7)$
(see Theorem~\ref{tmmain}), for the
second one the answer is not known.

Suppose now that $\alpha _5=-$. Then $\mathcal{K}_5$ is realizable while
$\mathcal{K}^5$ might not be only if $\sigma ^5=\Sigma _{1,4,1}$. In this case 
we consider $\mathcal{K}_6$ and $\mathcal{K}^6$. For $\alpha _6=+$,
the couple $\mathcal{K}^6$ is realizable whereas $\mathcal{K}_6$ could be
non-realizable, if $\sigma _6=\Sigma _{1,3,1}$ hence
$\sigma _{\bullet}=\Sigma _{1,3,1,1,4,1}$. This is the case (\ref{eqrealiz})
to which
one has applied the involution $i_r$, so it is realizable.
For $\alpha _5=\alpha _6=-$, one is, up to the involution $i_r$, in the case
$\alpha _{10}=\alpha _5=\alpha _4=+$, $\alpha _0=-$, which was already
considered.
 \vspace{1mm}

  C) Suppose that $d=11$.
  \vspace{1mm}
  
  C1) We use the same notation as the one used in part B1)
  of this proof; in particular, we denote by
  $\mathcal{K}_{\flat}:=(\sigma _{\flat},(pos, neg))$ a compatible couple with
  $\tilde{\mu}\geq 2$, where
  $\sigma _{\flat}=(\sigma _{\dagger},\alpha ,\beta )$.
  As in part B1) of this proof we show that the only cases in which the couple
  $\mathcal{K}_{\flat}$ is possibly non-realizable are when $\alpha \neq \beta$
  and 

  $$\mathcal{K}_{\triangle}=((\sigma _{\dagger},\alpha ),(pos-1,neg))=
  (\Sigma _{1,4,5,1},(1,7))=\mathcal{K}_{4,5}~~~\, {\rm or}~~~\,
  (\Sigma _{1,4,4,2},(1,7))~,$$
  that is, $\mathcal{K}_{\triangle}$ corresponds to one of the two cases
  mentioned in part (2) of the present theorem. 
  In these cases $\mathcal{K}_{\flat}$ is realizable, because it
  equals respectively

  $$\begin{array}{cclc}
    (\Sigma _{1,4,5,1,1},(2,7))&=&(\Sigma _{1,4,1},(2,3))
  \ast (\Sigma _{5,1,1},(0,4))&
       {\rm and}\\ \\ (\Sigma _{1,4,4,2,1},(2,7))&=&
       (\Sigma _{1,4,1},(2,3))\ast (\Sigma _{4,2,1},(0,4))~.&\end{array}$$

  C2) We use the notation and method of proof as developed in part B2) of
  this proof. We are looking for non-realizable couples $\mathcal{K}_{\bullet}$.
  Suppose first that
  \vspace{1mm}
  
  \noindent 1. $\alpha _5=+$. Then the couple
  $\mathcal{K}^5$ is realizable and the couple $\mathcal{K}_5$ might not be
  realizable only if $\sigma _5$ corresponds to one of the cases (\ref{eqd=6})
  up to the involution $i_r$. This means that $\sigma _5$ is among the sign
  patterns

  $$\Sigma _{1,5,1}~,~~~\, \Sigma _{1,1,1,3,1}~,~~~\, \Sigma _{1,3,1,1,1}~,~~~\,
  \Sigma _{2,4,1}~~~\, {\rm and}~~~\, \Sigma _{1,4,2}~.$$
  1.1. If $\alpha _4=+$, then $\mathcal{K}^4$ is realizable and
  $\sigma _4$ is one of the sign patterns

  $$\Sigma _{1,5,2}~,~~~\, \Sigma _{1,1,1,3,2}~,~~~\, \Sigma _{1,3,1,1,2}~,~~~\,
  \Sigma _{2,4,2}~~~\, {\rm and}~~~\, \Sigma _{1,4,3}~.$$
  1.1.1. The second and third sign patterns of this list do not correspond to
  non-realizable cases, see (\ref{eqd=7}).
  \vspace{1mm}
  
  \noindent 1.1.2. In the cases
  $\sigma _4=\Sigma _{1,5,2}$ and $\sigma _4=\Sigma _{2,4,2}$ the first component
  of the sign pattern $\sigma ^8$ must be $3$ and the second must be $\geq 2$;
  the sign pattern $\sigma _8$ corresponds to realizable couples. Then one
  concludes from (\ref{eqd=8}) that $\sigma _{\bullet}$ is one of the
  sign patterns

  $$\Sigma _{1,5,5,1}~,~~~\, \Sigma _{1,5,4,2}~,~~~\, \Sigma _{2,4,5,1}~~~\,
  {\rm and}~~~\, \Sigma _{2,4,4,2}~.$$
  The first of them is not realizable with the admissible pair $(1,8)$, see
  Theorem~\ref{tmmain};
  for the second, third and fourth one the answer to this question
  is not known. (The second and third are in one and the same orbit.) We show
  the realizability of the four sign patterns with the other admissible pairs
  $(1,a)$, $a=6$,
  $4$ or~$2$:

  $$\begin{array}{l}
    (\Sigma _{1,5,5,1},(1,a))=(\Sigma _{1,5,3},(0,a-2))~\ast ~
    (\Sigma _{3,1},(1,2))\\ \\
    (\Sigma _{1,5,4,2},(1,a))=(\Sigma _{1,5,3},(0,a-2))~\ast ~
    (\Sigma _{2,2},(1,2))\\ \\
    (\Sigma _{2,4,4,2},(1,a))=(\Sigma _{2,4,3},(0,a-2))~\ast ~
    (\Sigma _{2,2},(1,2))~.
\end{array}$$
  
  \noindent 1.1.3. In the case $\sigma _4=\Sigma _{1,4,3}$ the first
  component of $\sigma ^8$ equals $2$ and the second is $\geq 3$.
  Using the list (\ref{eqd=8}) one obtains the following possibilities
  for~$\sigma _{\bullet}$:

  $$\Sigma _{1,4,5,2}~,~~~\, \Sigma _{1,4,6,1}~,~~~\,
  \Sigma _{1,4,4,3}~~~\, {\rm and}~~~\, \Sigma _{1,4,4,1,1,1}~.$$
  The second of them is not realizable with the admissible pair
  $(1,8)$ (see Theorem~\ref{tmmain}). The third sign pattern is realizable
  with the admissible pair $(1,8)$. Indeed, by
  \cite[Theorem~3, part (iii)]{CGK1}, the couple
  $(\Sigma _{4,4,3},(0,8))$ is realizable, so one can set

$$(\Sigma _{1,4,4,3},(1,8))=((+,-),(1,0))\ast (\Sigma _{4,4,3},(0,8))~.$$ 
  The realizability of other possible cases
  is given below; for the cases which are not covered by this list
  the answer is not known.

  $$\begin{array}{ll}
(\Sigma _{1,4,5,2},(1,a))=(\Sigma _{1,4,4},(0,a-2))~\ast ~
    (\Sigma _{2,2},(1,2))~,&a=6,~4~~~\, {\rm or}~~~\, 2~;\\ \\
    (\Sigma _{1,4,6,1},(1,a))=(\Sigma _{1,4,4},(0,a-2))~\ast ~
    (\Sigma _{3,1},(1,2))~,&a=6,~4~~~\, {\rm or}~~~\, 2~;\\ \\
    (\Sigma _{1,4,4,3},(1,a))=(\Sigma _{1,4,4},(0,a-2))~\ast ~
    (\Sigma _{1,3},(1,2))~,&a=6,~4~~~\, {\rm or}~~~\, 2~;\\ \\
    (\Sigma _{1,4,4,1,1,1},(1,a))=(\Sigma _{1,4,4},(0,a-2))~\ast ~
    (\Sigma _{1,1,1,1},(1,0))~,&a=4~~~\, {\rm or}~~~\, 2~.
    \end{array}$$
  
  \noindent 1.2. If now $\alpha _4=-$, the couple $\mathcal{K}_4$ is
  realizable, so one
  has to treat only the situation $\sigma ^4=\Sigma _{1,3,1}$ in which
  $\mathcal{K}^4$ is not realizable with the admissible pair $(0,2)$. This,
  combined with the list (\ref{eqd=6}), 
  gives the following sign patterns $\sigma _{\bullet}$:

  $$\Sigma _{1,5,1,1,3,1}~,~~~\, \Sigma _{1,1,1,3,1,1,3,1}~,~~~\,
  \Sigma _{1,3,1,1,1,1,3,1}~,~~~\,
  \Sigma _{2,4,1,1,3,1}~~~\, {\rm and}~~~\, \Sigma _{1,4,2,1,3,1}~.$$

  \noindent 1.2.1. The first, fourth and fifth sign patterns
  are realizable with the admissible pairs $(1,6)$, $(1,4)$ and $(1,2)$:

  $$\begin{array}{l}
    (\Sigma _{1,5,1,1,3,1},(1,a+b))=(\Sigma _{1,4},(1,a))\ast 
    (\Sigma _{2,1,1,3,1},(0,b))~,~~~a,b=1~~~{\rm or}~~~3~,\\ \\
    (\Sigma _{2,4,1,1,3,1},(1,a+b))=(\Sigma _{2,3},(1,a))\ast
    \Sigma _{2,1,1,3,1},(0,b))~,~~~a,b=1~~~
           {\rm or}~~~3~,\\ \\
           (\Sigma _{1,4,2,1,3,1},(1,a+b))=(\Sigma _{1,4},(1,a))\ast 
           (\Sigma _{1,2,1,3,1},(0,b))~,~~~a,b=1~~~
           {\rm or}~~~3~.
    \end{array}$$

  \noindent 1.2.2. The second (resp. the third) sign patterns is
  realizable with
  the admissible pairs $(1,4)$ and $(1,2)$ (resp. $(1,2)$);
  for the third sign pattern and
  for the admissible pair $(1,4)$, the answer is not known:

  $$\begin{array}{l}
    (\Sigma _{1,1,1,3,1,1,3,1},(1,4))=(\Sigma _{1,1,1},(0,0))~\ast ~
    (\Sigma _{1,3,1,1,3,1},(1,4))~,\\ \\
    (\Sigma _{1,1,1,3,1,1,3,1},(1,2))=(\Sigma _{1,1,1,3,1,1,1},(0,0))~\ast ~
    (\Sigma _{3,1},(1,2))~,\\ \\
    (\Sigma _{1,3,1,1,1,1,3,1},(1,2))=(\Sigma _{1,3,1,1,1,1,1},(0,0))~\ast ~
    (\Sigma _{3,1},(1,2))~.
  \end{array}$$

  \noindent 2. $\alpha _5=-$. The couple $\mathcal{K}_5$ is realizable while
  $\mathcal{K}^5$ is not only if $\sigma ^5=\Sigma _{1,4,1}$. 

  \noindent 2.1. If $\sigma ^5=\Sigma _{1,4,1}$ and $\alpha _6=+$, then
  $\mathcal{K}^6$ is realizable whereas $\mathcal{K}_6$ is not realizable
  only if $\sigma _6=\Sigma _{1,4,1}$, so $\sigma _{\bullet}=\Sigma _{1,4,1,1,4,1}$.
  This sign pattern is realizable with the admissible pairs $(1,4)$ and
  $(1,2)$ (for $(1,6)$, the answer remains unknown):

  $$
  (\Sigma _{1,4,1,1,4,1},(1,4)~~~\, {\rm or}~~~\, (1,2))=
  (\Sigma _{1,3},(1,2))~\ast ~
    (\Sigma _{2,1,1,4,1},(0,2)~~~\, {\rm or}~~~\, (0,0))~.$$

  \noindent 2.2. If $\alpha _5=-$, $\sigma ^5=\Sigma _{1,4,1}$ and
  $\alpha _6=-$, i.~e.
  $\sigma ^6=\Sigma _{2,4,1}$, then applying the involution $i_r$
  (see (\ref{eqir})) one
  transforms this case into $\sigma _5=\Sigma _{1,4,2}$, $\alpha _0=-$.
  This case was treated in~1.
\vspace{1mm}

D) Suppose that $d=12$. One can try to represent a given couple
  $\mathcal{K}_{\bullet}$ with $\tilde{\mu}\geq 2$
  as a concatenation of couples $\mathcal{K}'$
  and $\mathcal{K}''$ of degree $10$ and $2$ respectively. If $\mathcal{K}'$ is
  realizable, then such is $\mathcal{K}_{\bullet}$ as well, because all
  compatible couples of degree $2$ are realizable. So we assume that 
  $\mathcal{K}'$ is not realizable. Then one can assume that $\mathcal{K}'$
  is one of the two couples of part~(2) of the theorem. Hence the sign pattern
  of $\mathcal{K}'$ is among the following ones:

  $$\begin{array}{llll}
    \Sigma _{1,4,5,3}~,&\Sigma _{1,4,5,2,1}~,&\Sigma _{1,4,5,1,2}~,&
    \Sigma _{1,4,5,1,1,1}~,\\ \\
    \Sigma _{1,4,4,4}~,&\Sigma _{1,4,4,3,1}~,&
    \Sigma _{1,4,4,2,2}~,&\Sigma _{1,4,4,2,1,1}~.\end{array}$$
  Represent these sequences of $4$, $5$ or $6$ numbers in the forms
  $(1,A)$ and $(1,4,B)$, where $A$ is the sequence of the last
  $3$, $4$ or $5$
  of them, and $B$ of the last $2$, $3$ or $4$ respectively.
  If $3\leq pos \leq neg$, then

  \begin{equation}\label{eq12}
    \mathcal{K}_{\bullet}=(\Sigma _{1,1},(1,0))~\ast ~\mathcal{K}^{\sharp}~,~~~\,
            {\rm where}~~~\, \mathcal{K}^{\sharp}=(\Sigma _{1,A},(pos-1,neg))~.
            \end{equation}
  The couple $\mathcal{K}^{\sharp}$ is with $d=11$ and $\tilde{\mu}\geq 2$ hence
  realizable. Hence $\mathcal{K}_{\bullet}$ is also realizable.

  Suppose that $2=pos\leq neg$. Then one can write

  \begin{equation}\label{eq12bis}
    \mathcal{K}_{\bullet}=(\Sigma _{1,4},(1,3)~~~\, {\rm or}~~~\, (1,1))~\ast ~
    (\Sigma _{1,B},(a,b))~,
    \end{equation}
  where the admissible pair of $\mathcal{K}_{\bullet}$ is $(a+1,b+3)$ if
  $neg\geq 4$ or $(a+1,b+1)$ if $neg=2$ or~$3$. Each couple
  $(\Sigma _{1,4},(1,3))$, $(\Sigma _{1,4},(1,1))$ and $(\Sigma _{1,B},(a,b))$ is
  realizable (for the latter one has $d=8$ and $\tilde{\mu}\geq 1$), so
  $\mathcal{K}_{\bullet}$ is realizable.
  \vspace{1mm}

 E) Suppose that $d=13$ or $d=14$. (For $d=14$, we assume that the theorem is
  proved for $d=13$.) Similarly to part D), one can try to represent
  a given couple
  $\mathcal{K}_{\bullet}$ with $\tilde{\mu}\geq 2$
  as a concatenation of couples $\mathcal{K}'$
  and $\mathcal{K}''$ of degree $10$ and $3$ respectively (or $10$ and $4$
  if $d=14$). As in part D),
  we assume that $\mathcal{K}'$ is one of the couples of part~(2)
  of the theorem. We denote the sign pattern of $\mathcal{K}_{\bullet}$
  by $(1,A)$ and $(1,4,B)$ with similar definition of $A$ and $B$. 

  For $3\leq pos \leq neg$, we use formula (\ref{eq12}). The couple
  $\mathcal{K}^{\sharp}$ is with $d=12$ (resp. $d=13$)
  and $\tilde{\mu}\geq 2$ hence realizable. For
  $2=pos\leq neg$, we can use formula (\ref{eq12bis}) with the reasoning after
  it except in the case when $(\Sigma _{1,B},(a,b))$ is of the orbit of
  the couple (\ref{eqcased9}) (resp. of one of the orbits of the couples
  of part~(2) of the theorem). The first $8$ signs of the sign pattern
  of $\mathcal{K}_{\bullet}$ are $(+,-,-,-,-,+,+,+)$. Further we treat
  separately the cases $d=13$ and $d=14$. 

Suppose that $d=13$. These first $8$ signs imply that from the orbit
  of the sign pattern $\Sigma _{1,4,4,1}=i_r(\Sigma _{1,4,4,1})$ one has to
  choose $\Sigma _{1,4,4,1}$ (and not
  $\Sigma _{2,1,1,2,1,1,2}=i_m(\Sigma _{1,4,4,1})=i_r(\Sigma _{2,1,1,2,1,1,2})$)
  to be equal to $\Sigma _{1,B}$. Thus the sign pattern of
  $\mathcal{K}_{\bullet}$ is $\Sigma _{1,4,4,4,1}$. The admissible pair equals
  $(2,\nu )$, $\nu =9$, $7$, $5$ or $3$. We set

  $$(\Sigma _{1,4,4,4,1},(2,\nu ))=(\Sigma _{1,2},(1,1))~\ast ~
  \mathcal{K}^*~\ast ~(\Sigma _{1,1},(1,0))~,~~~\,
  \mathcal{K}^*:=(\Sigma _{3,4,4},(0,\nu -1))$$
  The first and the third couple in this concatenation are realizable.
  The couple $\mathcal{K}^*$ is realizable for $\nu =9$,
  see \cite[Theorem~4]{CGK1}.
  Hence there exists a real polynomial $Q_{\natural}$
  with sign pattern $\Sigma _{3,4,4}$
  and having $8$ negative distinct roots and a complex conjugate pair.
  One can perturb the coefficients of
  $Q_{\natural}$ without changing its sign pattern so that all its critical
  values become distinct.
  Hence for suitable positive values of $t$ one obtains polynomials
  $Q_{\natural}+t$ having the sign pattern $\Sigma _{3,4,4}$ and with
  exactly $6$, $4$ or $2$ distinct negative roots and, respectively,
  $2$, $3$ or $4$ conjugate pairs. Thus the couple $\mathcal{K}^*$
  (and $(\Sigma _{1,4,4,4,1},(2,\nu ))$ as well)
  is realizable for $\nu =9$, $7$, $5$ or $3$.
  \vspace{1mm}

F) Suppose that $d=14$. The first $8$ signs of the sign pattern
  of $\mathcal{K}_{\bullet}$ (see E)) mean that the sign pattern
  $\Sigma _{1,B}$ is in one of the orbits of the sign patterns
  $\Sigma _{1,4,5,1}$
  or $\Sigma _{1,4,4,2}$, see part~(2) of the theorem; the first component of
  $B$
  must be $\geq 3$. Hence $\Sigma _{1,B}$ is among the following sign patterns:

  $$\Sigma _{1,4,5,1}~,~~~\, \Sigma _{1,5,4,1}~~~\, {\rm or}~~~\,
  \Sigma _{1,4,4,2}~.$$
  For $\Sigma _{1,B}=\Sigma _{1,4,4,2}$, one has

  $$\mathcal{K}_{\bullet}=(\Sigma _{1,4,4,4,2},(2,\nu +1))=
  (\Sigma _{1,4,4,4,1},(2,\nu ))~\ast ~(\Sigma _{2},(0,1))~,$$
  where $\nu =9$, $7$, $5$ or $3$ and the first concatenation factor is
  realizable (see E)). Hence $\mathcal{K}_{\bullet}$ is also realizable.
  For $\Sigma _{1,B}=\Sigma _{1,4,5,1}$, one has

  $$\mathcal{K}_{\bullet}=(\Sigma _{1,4,4,5,1},(2,\nu +1))=(\Sigma _{1,2},(1,1))
  ~\ast ~\mathcal{K}^*~\ast ~(\Sigma _{2,1},(1,1))$$
  with $\mathcal{K}^*$ as in~E), so this case is also realizable. For
  $\Sigma _{1,B}=\Sigma _{1,4,5,1}$, the sign pattern of $\mathcal{K}_{\bullet}$
  equals $\Sigma _{1,4,5,4,1}$. It is realizable with the admissible pairs
  $(2,\rho )$, $\rho =2$, $4$, $6$ or $8$:

  $$(\Sigma _{1,4,5,4,1},(2,\rho ))=(\Sigma _{1,4,1},(0,1))~\ast ~
  (\Sigma _{5,4,1},(2,\rho -1))~.$$
  For $\rho =10$, the answer to Problem~\ref{prob1} remains unknown.

\section{Proof of Theorem~\protect\ref{tm2SC}\protect\label{secprtm2SC}}

A) We consider first the (non)-realizable cases with the admissible pairs
$(0,1)$ and $(0,3)$. Every couple $(\Sigma _{m,n,q},(0,1))$
with $m+n+q=2\ell$, $\ell \geq 2$, $m>0$, $n>0$, $q>0$, is realizable --
it suffices to choose
a polynomial with sign pattern $\Sigma _{m,n,q}$ and to add to it a large
positive constant. Further we suppose that $q\geq m$,
otherwise one considers the couple from
the same orbit $(\Sigma _{q,n,m},(0,3))$ using the involution~$i_r$.

On the other hand any couple of the form
$(\Sigma _{d+1},(0,k))$ is realizable (\cite[Example~1]{CGK1}). This implies
that for $d=9$, any couple with $q\geq 3$ and admissible pair $(0,3)$ is
realizable as

$$(\Sigma _{m,n,q},(0,3))=(\Sigma _{m,n,q-2},(0,1))\ast (\Sigma _3,(0,2))~.$$
The couple $(\Sigma _{2,6,2},(0,3))$ is realizable as

$$(\Sigma _{2,6,2},(0,3))=(\Sigma _{2,6,1},(0,2))\ast (\Sigma _2,(0,1))~,$$
where the first factor is from the same orbit as

$$(\Sigma _{1,6,2},(0,2))=(\Sigma _{1,6,1},(0,1))\ast (\Sigma _2,(0,1))$$
hence this orbit is realizable. We prove realizability of the orbit
$(\Sigma _{1,7,2},(0,3))$ by direct construction. We set

$$G_0:=x(x^2-1)^2(x^4+2x^2+1)=x^9-2x^5+x~,$$
and then $G_1:=G_0-0.0001(x+1)^8+0.2$ which has three negative roots
  $-1.09\ldots$, $-0.84\ldots$ and $-0.20\ldots$ and three complex conjugate
pairs of roots. The sign pattern defined by $G_1$ is $\Sigma _{1,7,2}$,
because

  $$\begin{array}{ccl}
    G_1&=&x^9-0.0001x^8-0.0008x^7-0.0028x^6-2.0056x^5-0.0070x^4\\ \\
    &&-0.0056x^3-0.0028x^2+
  0.9992x+0.1999~.\end{array}$$
Finally, the orbit $(\Sigma _{1,8,1},(0,3))$ is not realizable, see
\cite[part (i) of Theorem~4]{KoSh1} (one has to apply the involution $i_m$
to the series of non-realizable examples described there). 
\vspace{1mm}

B) Suppose that the admissible pair is $(0,5)$. If $q\geq 5$, then one
realizes the couple by the concatenation

$$(\Sigma _{m,n,q-4},(0,1))\ast (\Sigma _5,(0,4))~.$$
Suppose that either $q=4$ or $q=3$ and $m=2$ or $3$. Then by
\cite[Theorem~9]{FoKoSh} the corresponding couple
$(\Sigma _{m,n,q-2},(0,3))$ is realizable
and one sets

$$(\Sigma _{m,n,q},(0,5))=(\Sigma _{m,n,q-2},(0,3))\ast (\Sigma _3,(0,2))~.$$
The couple $(\Sigma _{1,8,1},(0,5))$ is not realizable (see \cite[part (i) of Theorem~4]{KoSh1}). The following
proposition settles the remaining cases:

\begin{prop}\label{prop3cases}
  The couples $(\Sigma _{1,7,2},(0,5))$, $(\Sigma _{1,6,3},(0,5))$ and
  $(\Sigma _{2,6,2},(0,5))$ are not realizable.
\end{prop}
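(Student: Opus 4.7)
My plan is to mimic the approach of the proof of Theorem~\ref{tmmain}. I would assume that one of the three couples is realized by a polynomial $Q$ of degree $9$, parametrise $Q=R\cdot S$ as a product of a real-rooted and a complex-rooted factor, and derive a contradiction from the sign constraints on the coefficients of $Q$ together with the positivity of the complex factor on~$\mathbb{R}$. Concretely, I would write $R(x)=\prod_{i=1}^5(x+x_i)=x^5+e_1x^4+e_2x^3+e_3x^2+e_4x+e_5$ with $x_i>0$ and $e_j>0$ (the elementary symmetric functions of the $x_i$), and $S(x)=(x^2-u_1x+v_1)(x^2-u_2x+v_2)=x^4+s_3x^3+s_2x^2+s_1x+s_0$ with $v_i>0$ and $u_i^2<4v_i$, so that $S(x)>0$ for every real~$x$.

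First I would expand $Q=RS$ to obtain bilinear formulas for the coefficients $a_k$ in the spirit of~(\ref{eqlist}). The extremal ones are
\begin{equation*}
a_9=1,\quad a_8=e_1+s_3,\quad a_1=e_5s_1+e_4s_0,\quad a_2=e_5s_2+e_4s_1+e_3s_0,\quad a_0=e_5s_0,
\end{equation*}
with analogous expressions in the middle range. For $\Sigma _{1,7,2}$ and $\Sigma _{1,6,3}$ the requirement $a_8<0$ forces $s_3<-e_1<0$, whereas for $\Sigma _{2,6,2}$ the requirement $a_8>0$ forces $s_3>-e_1$. Combined with $a_1>0$ and the prescribed sign of $a_2$, these linear-in-$s$ inequalities bound $s_0,s_1,s_2$ relative to the $e_j$. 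Using the involution $i_r$ (which preserves $\Sigma _{2,6,2}$ and exchanges $\Sigma _{1,6,3}$ with $\Sigma _{3,6,1}$) I would also normalise the $x_i$, for example by assuming $e_1$ lies in a specific range or that at least one $x_i\ge 1$, as in part~D) of the proof of Theorem~\ref{tmmain}.

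The decisive step is to show that the bounds on $s_0,s_1,s_2,s_3$ forced by the sign pattern are incompatible with $S>0$ on~$\mathbb{R}$. Following the pattern of Theorem~\ref{tmmain}, I would fix $e_1,\ldots,e_5$, treat the sign constraints as linear conditions on the $s_k$, and study a two-dimensional slice (for instance the $(s_2,s_3)$-plane with $s_0,s_1$ determined or bounded by the preceding linear estimates). Then I would compute the intersection of the two most restrictive $a_j=0$ lines in this slice and show that at this intersection point the resulting quartic $S$ satisfies $S(x_0)\le 0$ at a suitable explicit $x_0<0$ (a natural candidate is a convex combination of the $-x_i$), contradicting the assumption that $S>0$ on $\mathbb{R}$.

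The main obstacle is that, unlike in Theorem~\ref{tmmain} where positivity of the single quadratic factor $x^2-ux+v$ is encoded by the clean half-parabola $v>u^2/4$, the positivity locus of a real monic quartic in the four-dimensional $(s_0,s_1,s_2,s_3)$-space is a curved semialgebraic set bounded by the discriminant hypersurface, so the ``parabola versus line'' geometry must be replaced by a more intricate argument or by the choice of a specific test value $x_0$. The three sign patterns require separate verifications; the case $\Sigma _{2,6,2}$, being self-dual under $i_r$, is likely the most delicate and may need an additional normalisation to break its symmetry and reduce to a two-variable computation.
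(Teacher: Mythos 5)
Your proposal is a plan rather than a proof: the decisive step is missing. You correctly set up the factorization $Q=R\cdot S$ with $R$ carrying the five negative roots and $S$ a positive-definite quartic, and the bilinear formulas for the $a_k$ are fine. But everything hinges on the claim that the sign constraints force $S(x_0)\le 0$ at some real $x_0$, and you neither exhibit such an $x_0$ nor show that one exists. The analogy with Theorem~\ref{tmmain} breaks down precisely where you say it does: there the complex factor is a single quadratic, its positivity locus is the half-space-like region $v>u^2/4$ in a $2$-dimensional parameter plane, and the whole argument reduces to checking that one intersection point of two lines lies below a parabola. Here the positivity cone of monic quartics sits in a $4$-dimensional $(s_0,s_1,s_2,s_3)$-space, is bounded by the discriminant, and the admissible region cut out by the sign conditions is a polyhedron whose vertices depend on the five free parameters $e_1,\dots,e_5$ (equivalently $x_1,\dots,x_5$). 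A separating test point $x_0$ would have to work uniformly over this $5$-parameter family (your normalisation removes only one degree of freedom), and nothing in the proposal indicates that the ``two most restrictive $a_j=0$ lines'' in a chosen slice even pin down a point at which $S$ must be nonpositive. As written, the contradiction is asserted, not derived.

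For comparison, the paper takes a quite different route that deliberately avoids working with the full $8$-parameter family. It first deforms a hypothetical realizing polynomial $V$ (by adding $tx$ and positive constants, Lemma~\ref{lmtransform}) into a polynomial with a \emph{double positive root} and negative roots collapsed into one of a short explicit list of multiplicity vectors; this step leans on non-existence results in degrees $7$ and $9$ (Lemmas~\ref{lmnotexist7} and~\ref{lmnotexist9}, themselves resting on \cite{FoKoSh}) and on the simultaneous-shift Lemma~\ref{lmshift} to control vanishing coefficients. The residual problem is then a handful of explicit families in at most $5$ parameters, e.g. $(x+\mu_2)^j(x+\mu_1)^k(x+1)^{5-j-k}((x+u)^2+v)(x-c)^2$, which are killed by a Lagrange-multiplier computation verified with computer algebra. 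If you want to salvage your more direct approach, you would need either a genuinely new separation argument for the quartic positivity cone against a parametrised polyhedron, or you would end up reintroducing a reduction of the kind the paper performs.
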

The proposition is proved in Section~\ref{secprprop3cases}.
\vspace{1mm}

C) Suppose that $d=9$ and the admissible pair is $(0,7)$. The couple $(\Sigma _{1,8,1},(0,7))$ is not realizable, see \cite[part (i) of Theorem~4]{KoSh1}. It follows from 
\cite[Theorem~3]{CGK1} that if $n=1$, $2$ or $3$, then such a sign pattern is
realizable with the admissible pair $(0,7)$. The same theorem implies that
for $m=1$, $n\geq 4$, the couple $(\Sigma _{m,n,q},(0,7))$ is not realizable.
Proposition~1 in \cite{CGK1} says that the couples
$(\Sigma _{3,4,3},(0,7))$ and $(\Sigma _{2,4,4},(0,7))$ are not realizable.
The remaining couples $(\Sigma _{2,5,3},(0,7))$ and $(\Sigma _{2,6,2},(0,7))$
are not realizable by \cite[Proposition~6]{FoKoSh}.

\section{Proof of Proposition~\protect\ref{prop3cases}
  \protect\label{secprprop3cases}}

\begin{defi}
  {\rm (1) A {\em generalized sign pattern} is a string of signs $+$, $-$
    and/or $0$
    beginning with a $+$. A real polynomial $P$ with positive leading
    coefficient is said to define a given
    generalized sign pattern $\sigma$ if the components of $\sigma$ are equal 
    to the signs of the corresponding coefficients of $P$. Given a sign
    pattern $\sigma _1$ and a generalized sign pattern $\sigma _2$
    of the same length, $\sigma _2$ is called {\em adjacent} to $\sigma _1$ if
    it is obtained from $\sigma _1$ by replacing some of its components
    (excluding the initial $+$) by zeros. The {\em closure} of a given
    sign pattern is the set containing the sign pattern and all generalized
    sign patterns adjacent to it. 

    (2) We call {\em simultaneous shift} a map
    $\tau _{\pm}:x\mapsto x\pm \varepsilon$, where $\varepsilon >0$. We usually
    consider $\varepsilon$ to be sufficiently small.}
\end{defi}

\begin{lm}\label{lmshift}
  Given a real monic polynomial $W:=\sum _{j=0}^da_jx^j$
  with at least one vanishing coefficient,
  there exists a simultaneous shift $\tau _{\pm}$
  after which all coefficients of $W$ are
  non-zero and the initially non-zero coefficients keep their signs.
  Moreover, if $a_k=0\neq a_{k+1}$, then one can choose the sign $+$ or $-$
  in the definition of $\tau _{\pm}$ so that after the shift the sign of
  the new coefficient $a_k$ be the desired one ($+$ or $-$).
\end{lm}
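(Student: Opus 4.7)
The plan is to Taylor-expand $W$ around the shifted point and read off the sign of each coefficient. Writing $W(x+t) = \sum_{k=0}^d \tilde a_k(t)\, x^k$, one obtains the polynomial identity
$$\tilde a_k(t) = \sum_{j=k}^d a_j \binom{j}{k} t^{j-k},$$
so that the simultaneous shifts $\tau_{\pm}$ correspond to evaluating at $t = \pm\varepsilon$ with $\varepsilon > 0$ small. Every claim in the lemma will be extracted from the lowest-order term in $\varepsilon$ of this polynomial in $t$.

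First I would handle the easy half. If $a_k \neq 0$, then $\tilde a_k(t) = a_k + O(t)$, so for $|t|$ sufficiently small the sign of $\tilde a_k(t)$ agrees with the sign of $a_k$; this gives the preservation-of-signs assertion. Next, if $a_k = 0$, let $j^\ast := \min\{j > k : a_j \neq 0\}$, which exists because $W$ is monic so $a_d = 1$. Then
$$\tilde a_k(t) = a_{j^\ast}\binom{j^\ast}{k} t^{j^\ast-k} + O\bigl(t^{j^\ast-k+1}\bigr),$$
a non-identically-zero polynomial in $t$ vanishing at $0$. Hence $\tilde a_k(t) \neq 0$ for every sufficiently small $|t| > 0$. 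Choosing $\varepsilon > 0$ small enough that both the above sign-preservation for every initially non-zero coefficient and the non-vanishing for every initially zero coefficient hold simultaneously (only finitely many conditions), we conclude that for either choice of $\tau_{\pm}$ the shifted polynomial has no vanishing coefficient and the initially non-zero ones retain their signs.

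For the moreover part, assume $a_k = 0$ and $a_{k+1} \neq 0$, so that $j^\ast = k+1$ and the leading-in-$t$ term of $\tilde a_k(t)$ is $(k+1)\, a_{k+1}\, t$. Hence at $t = \varepsilon$ the sign of the new coefficient $\tilde a_k$ is that of $a_{k+1}$, while at $t = -\varepsilon$ it is the opposite. Exactly one of the two shifts therefore yields any prescribed sign for the new $a_k$, and it can be made compatible with the first part of the lemma by further shrinking $\varepsilon$. I do not expect any real obstacle in carrying this out: the entire argument reduces to the Newton-polygon observation that the lowest-order term in $\varepsilon$ controls the sign of $\tilde a_k(\pm\varepsilon)$, together with the fact that finitely many small-$\varepsilon$ conditions can be imposed together. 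The only point that requires minimal care is ensuring that the single $\varepsilon$ chosen at the end simultaneously satisfies all of these open conditions, which is immediate.
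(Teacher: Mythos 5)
Your proof is correct and follows essentially the same route as the paper: Taylor-expand the shifted polynomial, note that each coefficient becomes a non-constant polynomial in $\varepsilon$ (hence non-vanishing for all but finitely many, in particular all sufficiently small, $\varepsilon>0$), and read the sign of a newly created coefficient from the lowest-order term $\pm(k+1)a_{k+1}\varepsilon$. The paper's proof is just a compressed version of your argument, so there is nothing to add.
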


\begin{proof}
  After the shift all coefficients of $W$ become non-constant polynomials in
  $\varepsilon$, so with the exception of finitely-many values of
  $\varepsilon$ they are all non-zero. After the shift the coefficient
  $a_k$ becomes $a_k\pm (k+1)a_{k+1}\varepsilon +o(\varepsilon )$
  from which the last statement of the lemma follows.
  \end{proof}

\begin{lm}\label{lmnotexist7}
  For $d=7$, there exists no monic polynomial $P:=\sum _{j=0}^7a_jx^j$
  satisfying simultaneously the following conditions:

  (i) one has $P(0)>0$ and $P$ defines either one of the sign patterns
  $\Sigma _{1,6,1}$ or
  $\Sigma _{1,5,2}$ or a generalized sign pattern adjacent to one of them;

   (ii) $P$ has $5$ negative roots counted with multiplicity;

  (iii) $P$ has either a complex conjugate pair of roots or a double
  positive root.
\end{lm}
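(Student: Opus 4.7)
The plan is to factor $P(x) = H(x) R(x)$, where $H(x) = \prod_{i=1}^{5}(x+x_i)$ with $x_i > 0$ (with multiplicity) collects the five negative roots, and $R(x) = x^2 + bx + c$ is the remaining quadratic factor. Condition~(iii) forces the discriminant $b^2 - 4c \leq 0$, while $P(0) = c\cdot e_5 > 0$ (with $e_5 = \prod x_i > 0$) forces $c > 0$. First I would expand to obtain
\[
a_j \;=\; e_{7-j} \;+\; b\, e_{6-j} \;+\; c\, e_{5-j},
\]
where $e_k$ denotes the $k$-th elementary symmetric function of $x_1,\dots,x_5$, with the convention $e_0 = 1$ and $e_k = 0$ for $k \notin \{0,1,\dots,5\}$.

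Next I would observe that both $\Sigma_{1,6,1}$ and $\Sigma_{1,5,2}$, and every generalized sign pattern adjacent to them, impose the two key conditions $a_6 \leq 0$ and $a_2 \leq 0$. Setting $\beta := -b$, the first gives $\beta \geq e_1 > 0$, while the second, combined with $c \geq b^2/4 = \beta^2/4$, collapses to the quadratic inequality
\[
Q(\beta) \;:=\; e_3\,\beta^2 \;-\; 4 e_4\,\beta \;+\; 4 e_5 \;\leq\; 0.
\]

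The heart of the argument is the identity
\[
e_1 e_3 \;=\; 4 e_4 \;+\; \sum_{i=1}^{5} x_i^2\, e_2^{(i)},
\]
where $e_2^{(i)}$ denotes the second elementary symmetric function of the four variables $\{x_j : j \neq i\}$. I would derive it by splitting $\sum_i \sum_{j<k<\ell} x_i x_j x_k x_\ell$ according to whether $i \in \{j,k,\ell\}$: the distinct-index contribution counts each $4$-subset exactly four times, giving $4 e_4$, while the repeated-index contribution gives the remaining sum, which is strictly positive since all $x_i > 0$. Hence $e_1 e_3 > 4 e_4$.

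Finally I would evaluate $Q(e_1) = e_1(e_1 e_3 - 4 e_4) + 4 e_5 > 0$ and note that the larger root of the upward-opening parabola $Q$ is bounded above by $4 e_4/e_3 < e_1$; consequently $Q(\beta) > 0$ for every $\beta \geq e_1$, contradicting $Q(\beta) \leq 0$. There is no serious obstacle here: what is striking is that only the two extremal constraints $a_6 \leq 0$ and $a_2 \leq 0$ (shared by all patterns in the closure of $\Sigma_{1,6,1}$ or $\Sigma_{1,5,2}$) are actually used, and the Maclaurin-type identity $e_1 e_3 > 4 e_4$, strict precisely because the $x_i$ are positive, is exactly strong enough to close the argument uniformly in both alternatives of~(iii).
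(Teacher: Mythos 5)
Your proof is correct, and it takes a genuinely different route from the paper's. The paper proves this lemma by reduction to the already-known list of non-realizable couples for $d=7$: it splits into the cases of a complex conjugate pair and of a double positive root, removes vanishing coefficients by a simultaneous shift $x\mapsto x\pm\varepsilon$, perturbs the negative roots to make them distinct, and adds a small $\delta x^4$ to turn the double positive root into a complex pair, finally invoking the non-realizability of the relevant couples $(\Sigma _{1,k,7-k},(0,5))$ from \cite[Theorem~9]{FoKoSh}. Your argument is instead direct and self-contained: writing $P=\prod _{i=1}^5(x+x_i)\,(x^2+bx+c)$ with $b^2-4c\le 0$ (which is indeed forced, since the five negative roots exhaust all but two roots of $P$, so the residual quadratic must carry the complex pair or the positive double root), you use only the two constraints $a_6\le 0$ and $a_2\le 0$, which hold uniformly over both sign patterns and all adjacent generalized ones, so multiplicities and vanishing coefficients cost nothing. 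The identity $e_1e_3=4e_4+\sum _i x_i^2e_2^{(i)}>4e_4$ is correct (the distinct-index terms count each $4$-subset four times), and it does place $e_1$ strictly beyond the larger root of the upward parabola $e_3\beta ^2-4e_4\beta +4e_5$, so $Q(\beta )>0$ for all $\beta \ge e_1$, giving the contradiction. What the paper's route buys is economy, given that the $d=7$ classification is already in the literature; what yours buys is independence from that classification (it even reproves non-realizability of the orbits of $(\Sigma _{1,6,1},(0,5))$ and $(\Sigma _{2,5,1},(0,5))$ as special cases) and the elimination of all shift and perturbation bookkeeping.
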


\begin{proof}
  Suppose first that $P$ has a complex conjugate pair of roots. Denote by
  $-\eta _j$ the negative roots of $P$. Set
  $P_1:=x(x+\eta _1)\cdots (x+\eta _5)$. Hence all coefficients of $P_1$
  are positive. For $\varepsilon >0$ small enough, the polynomial
  $P_2:=P-\varepsilon P_1$ defines one of the sign patterns $\Sigma _{1,6,1}$ or
  $\Sigma _{1,5,2}$ and has a complex conjugate pair. One can perturb the
  negative roots of $P_2$ to make them all distinct while keeping the signs of
  its coefficients the same and the presence of a complex conjugate pair.
  However such a polynomial $P_2$ does not exist, see \cite[Theorem~9]{FoKoSh}.

  Suppose that $P$ has a double positive root. Hence $P$ is hyperbolic. If
  $P$ has no vanishing coefficients, then one can perturb the negative roots
  of $P$ to make them all distinct without changing the signs of the
  coefficients. After this one considers
  the polynomial $P_3:=P+\delta x^4$ with $5$ distinct negative roots and
  defining the same sign pattern as
  $P$ when $\delta >0$ is small enough. The polynomial $P_3$ has no
  double positive root, but a complex conjugate pair close to the double
  root of $P$. Again by \cite[Theorem~9]{FoKoSh} such a polynomial $P_3$
  does not exist.

  Suppose that $P$ has a double positive root and at least one vanishing
  coefficient. We remind that $P$ cannot have two consecutive vanishing
  coefficients (\cite[Lemma 7]{KoMB}). For $\varepsilon >0$ small enough,
  all coefficients of $P$ can be made non-zero as a result of a shift
  $\tau _{\pm}$. We consider the following cases:
  \vspace{1mm}

   1) One has $a_6<0$ and $a_1<0$ (resp. $a_2<0$). A shift $\tau_+$ or
  $\tau _-$ makes
  all coefficients between $a_6$ and $a_1$ (resp. between $a_6$ and $a_2$)
  non-zero. Hence they are all negative, otherwise by Descartes' rule of signs
  $P$ cannot have $5$ negative roots. One perturbs the negative roots of $P$
  to make them distinct while keeping the sign pattern. Then for $\delta >0$
  small enough, the polynomial $P+\delta x^4$ has still $5$ distinct negative
  roots and the same sign pattern, but the double root gives birth to a
  complex conjugate pair close to it. Such a polynomial does not exist,
  see \cite[Theorem~9]{FoKoSh}.
  \vspace{1mm}

  2) One has $a_6=0$ and $a_1<0$ (resp. $a_2<0$). A shift $\tau _{\pm}$ with
  suitably chosen sign $+$ or $-$ makes all coefficients between $a_7$ and
  $a_1$
  (resp. between $a_7$ and $a_2$) non-zero, and in particular $a_6$
  becomes negative. Again by Descartes' rule of signs the rest of
  the coefficients between $a_7$ and $a_1$ (resp. between $a_7$ and $a_2$) are
  negative and as in case 1) one concludes that such a polynomial does not
  exist.
  \vspace{1mm}

   3) One has $a_6<0$ and $a_1=0$ (resp. $a_2=0$). A shift $\tau _{\pm}$ with
   suitably chosen sign $+$ or $-$ makes all coefficients between $a_6$ and
   $a_0$
  (resp. between $a_6$ and $a_1$) non-zero, and in particular $a_1$ (resp.
  $a_2$) becomes negative. The rest of the reasoning is as in case 2).
  \vspace{1mm}

   4) One has $a_6=a_1=0$ (resp. $a_6=a_2=0$). Then $a_2\neq 0$ (resp.
  $a_3\neq 0$), see \cite[Lemma 7]{KoMB}, hence $a_2<0$ (resp. $a_3<0$).
  A shift $\tau _{\pm}$ with
  suitably chosen sign $+$ or $-$ makes all coefficients between $a_7$ and
  $a_0$
  (resp. between $a_7$ and $a_1$) non-zero, and in particular $a_6$
  becomes negative. Hence the sign pattern of $P$ is now $\Sigma _{1,6,1}$,
  $\Sigma _{1,5,2}$ or $\Sigma _{1,4,3}$. One perturbs the negative roots of $P$
  to make them distinct while preserving the sign pattern, and then adds to
  $P$ the monomial $\delta x^4$ with $\delta >0$ small enough. The double root
  gives birth to a complex conjugate pair and $P$ realizes one of the couples
  $(\Sigma _{1,k,7-k},(0,5))$, $k=4$, $5$ or~$6$, which by
  \cite[Theorem~9]{FoKoSh} is impossible.
\end{proof}

\begin{lm}\label{lmnotexist9}
    There exists no real monic degree $9$ polynomial $U$ satisfying the
    following conditions:

    (i) its sign pattern is $\Sigma _{1,7,2}$, $\Sigma _{1,6,3}$ or
    $\Sigma _{2,6,2}$ or it is a
    generalized sign pattern adjacent to one of these sign patterns;

    (ii) it has $7$ negative roots counted with multiplicity;

    (iii) it has either a double positive root or a complex conjugate pair.
\end{lm}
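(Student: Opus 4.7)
The approach is to mirror the proof of Lemma~\ref{lmnotexist7}, with \cite[Theorem~9]{FoKoSh} replaced by the non-realizability --- established in part~C) of the proof of Theorem~\ref{tm2SC} --- of every couple $(\Sigma,(0,7))$ for $d=9$ whose sign pattern $\Sigma=\Sigma_{m,n,q}$ satisfies $n\geq 4$; the sign patterns $\Sigma_{1,7,2}$, $\Sigma_{1,6,3}$, $\Sigma_{2,6,2}$ of the statement all lie in this list. A useful preliminary reduction is that the polynomial $U$ cannot have any vanishing coefficient: each of the three base sign patterns has exactly $c=2$ sign changes and, using \cite[Lemma~7]{KoMB} to exclude two consecutive zeros, a direct inspection shows that every generalized sign pattern adjacent to one of them still has $c=2$ (none of these patterns contains a ``spike'' of the form $\cdots,\alpha,-\alpha,\alpha,\cdots$ whose middle entry, once zeroed, would merge two sign changes). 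Combining the Descartes inequality $p\geq neg=7$ with the identity $c+p=9-z$, where $z$ is the number of zero coefficients, yields $c\leq 2-z$, which together with $c=2$ forces $z=0$. Hence the sign pattern of $U$ is strictly one of the three.

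In the complex-pair case, let $-\eta_1,\ldots,-\eta_7$ denote the negative roots of $U$ counted with multiplicity. A small perturbation of the $\eta_j$'s leaves the sign pattern of $U$ intact and keeps the complex pair close to its original location, but makes the seven negative roots pairwise distinct; the resulting polynomial would realize one of the forbidden couples $(\Sigma_{1,7,2},(0,7))$, $(\Sigma_{1,6,3},(0,7))$, $(\Sigma_{2,6,2},(0,7))$ with distinct roots, a contradiction.

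In the double-positive-root case, write $U=(x-\alpha)^2R(x)$ with $\alpha>0$ and $R(\alpha)=\prod_j(\alpha+\eta_j)>0$. First perturb the $\eta_j$'s so that the seven negative roots become pairwise distinct while preserving the sign pattern. Then, for small $\delta>0$ and any index $k$ lying inside the block of minus signs common to the three sign patterns (for instance $k=4$), the polynomial $U+\delta x^k$ keeps this sign pattern, keeps the seven distinct negative roots, and, because $R(\alpha)>0$, splits the double positive root at $\alpha$ into a pair of complex conjugate roots near $\alpha$. This again realizes a forbidden $(0,7)$-couple, the required contradiction.

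The main delicate point is the preliminary reduction $z=0$: its failure would have forced a case analysis on the positions of the vanishing coefficients analogous to the four sub-cases in the proof of Lemma~\ref{lmnotexist7}; the Descartes-based bookkeeping above bypasses this difficulty, so that only the clean perturbation arguments of the complex-pair and double-positive-root cases remain.
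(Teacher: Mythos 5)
Your reduction to the case $z=0$ (no vanishing coefficients) is the weak point, and it does not hold. You set $p:=9-z-c$, the number of sign preservations among consecutive \emph{non-zero} coefficients, and then invoke ``$p\geq neg$''. But Descartes' rule applied to $U(-x)$ bounds $neg$ by the number of sign changes of $U(-x)$, and when $U$ has a zero coefficient $a_k=0$ with $a_{k+1},a_{k-1}$ of \emph{opposite} signs, the pair $(a_{k+1},a_{k-1})$ contributes a sign change to $U$ \emph{and} to $U(-x)$ (since $(-1)^{k+1}a_{k+1}$ and $(-1)^{k-1}a_{k-1}$ again have opposite signs), while contributing $0$ to your $p$. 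So $c(U(-x))$ can strictly exceed $p$, and the inequality $p\geq neg$ fails exactly when a zero sits at a block boundary. Concretely, the generalized sign pattern $(+,-,-,-,-,-,-,-,0,+)$, adjacent to $\Sigma_{1,7,2}$ with $a_1=0$, gives for $U(-x)$ the non-zero sign sequence $-,+,-,+,-,+,-,+,+$ with exactly $7$ sign changes, so Descartes is perfectly compatible with $7$ negative roots and $z=1$. (A count of the type you attempt only yields $c(U(-x))=7-2z+2e$, where $e\leq\min(z,2)$ is the number of zeros sitting at sign-change boundaries; this forces $e=z\leq 2$, not $z=0$.) The paper itself acknowledges that such adjacent patterns must be dealt with --- it notes, e.g., that the coefficient of $x$ ``can be $0$ only if the one of $x^2$ is non-positive'' --- and it disposes of them with the simultaneous-shift Lemma~\ref{lmshift} (and, in the subtraction step of the complex-pair case, with the possibility that the shifted pattern becomes $\Sigma_{1,8,1}$ or $\Sigma_{1,5,4}$, both of which must also be checked against the $(0,7)$ non-realizability list). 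Your two perturbation arguments for the genuinely zero-free case do mirror the paper's and are fine, but as written the proof silently discards the vanishing-coefficient cases that constitute the actual technical content of the lemma.

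A secondary, fixable imprecision: in the double-root case you perturb the $\eta_j$ first and then add $\delta x^k$; if some $a_k=0$ you would need to control the sign into which the perturbation pushes it (this is precisely what the choice of sign in the shift $\tau_{\pm}$ accomplishes in the paper), and you would need to verify that the resulting full sign pattern still lies in the list of couples known to be non-realizable with $(0,7)$.
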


 \begin{proof}
   First of all we observe that the last three components of the
   (generalized)
    sign pattern cannot be $(+,0,+)$, because in this case the polynomial
    $U(-x)$ has less than $7$ sign changes and by Descartes' rule of signs
    the polynomial cannot have $7$ negative roots. Hence the coefficients
    of $x^8$, $\ldots$, $x^3$ (or $x^7$, $\ldots$, $x^3$ in the case of
    $\Sigma _{2,6,2}$)
    are non-positive. The one of $x$ is
    non-negative and can be $0$ only if the one of $x^2$ is non-positive.

    If the polynomial $U$ has a double positive root, then it is hyperbolic.
    Recall that by \cite[Lemma 7]{KoMB} the polynomial $U$
    has no two consecutive
    vanishing coefficients. One
    can perform a simultaneous shift to make all coefficients non-zero and
    so that the coefficient of $x^8$ (or of $x^7$ in the case of
    $\Sigma _{2,6,2}$) is negative. By
    Descartes' rule of signs the sign pattern of the polynomial is now
    $\Sigma _{1,5,4}$, $\Sigma _{1,6,3}$, $\Sigma _{1,7,2}$, $\Sigma _{2,6,2}$ or
    $\Sigma _{2,7,1}$. One can perturb its
    negative roots to make them distinct and then add to
    the polynomial a monomial $\delta x^4$, where $\delta >0$ is small enough.
    Thus the sign pattern is preserved and the double positive root gives
    birth to a complex conjugate pair close to it. However such a polynomial
    does not exist, see \cite[Proposition~6]{FoKoSh}.

     If $U$ has a complex conjugate pair, then we change it to
     $U_1:=U-\varepsilon \prod _{j=1}^7(x+\eta _j)$, $-\eta _j$ being the
     negative roots of $U$. For $\varepsilon >0$
    small enough, all coefficients of $U_1$ with the possible exception only
    of the one of $x^8$ are non-zero. After a simultaneous shift the
    coefficient of $x^8$ becomes negative and the sign pattern now equals
    $\Sigma _{1,6,3}$, $\Sigma _{1,7,2}$, $\Sigma _{1,8,1}$ or $\Sigma _{2,6,2}$.
    Then one perturbs
    the negative roots to make them distinct, so the polynomial realizes
    one of the couples $(\Sigma _{1,6,3},(0,7))$, $(\Sigma _{1,7,2},(0,7))$,
    $(\Sigma _{1,8,1},(0,7))$ or $(\Sigma _{2,6,2},(0,7))$
    which is impossible, see \cite[Proposition~6]{FoKoSh}.
    
 \end{proof}

  \begin{defi}
    {\rm We call {\em multiplicity vector} the vector whose components
      are equal to the multiplicities of the negative roots of a real
      polynomial listed in the increasing order. The multiplicity vector
      $\vec{v}_1$ is {\em adjacent} to the multiplicity vector $\vec{v}_2$ if
      $\vec{v}_1$ is obtained by applying one or several times the operation
      of replacing two consecutive components by their sum.
      Example: $(5)$ is adjacent to $(3,2)$ which in turn is adjacent
      to $(3,1,1)$.}
  \end{defi}

  \begin{lm}\label{lmtransform}
    Suppose that there exists a degree $9$ real monic polynomial $V$ 
    realizing the couple $(\Sigma _{1,7,2},(0,5))$ (resp.
    $(\Sigma _{1,6,3},(0,5))$ or $(\Sigma _{2,6,2},(0,5))$).
    Then there exists a real monic degree $9$ polynomial satisfying the 
    following conditions:

    (i) it defines the sign pattern $\Sigma _{1,7,2}$ (resp. $\Sigma _{1,6,3}$
    or $\Sigma _{2,6,2}$);

    (ii) it has a double positive root; 
    
    (iii) the multiplicity vector of its negative roots is among the
    following ones:

     $$(1,2,2)~,~~~\, (2,1,2)~,~~~\, (3,1,1)~,~~~\, (1,1,3)~,~~~\, (3,2)~,
    ~~~\, (2,3)~.$$
  \end{lm}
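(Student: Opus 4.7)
The three sign patterns $\Sigma_{1,7,2}$, $\Sigma_{1,6,3}$, $\Sigma_{2,6,2}$ can be handled in parallel. The given realizer $V$ has five simple distinct negative roots $-\eta_1,\ldots,-\eta_5$ and two complex conjugate pairs. My plan is a two-stage continuous deformation inside the open set of monic degree $9$ polynomials with the chosen sign pattern and with no positive roots.

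The first stage will be to replace one of the complex conjugate pairs of $V$ with a double positive root. I would consider a family $V_t(x) := V(x) - t\,P(x)$ with $t \geq 0$ and $P$ a polynomial non-negative on $[0,\infty)$, chosen so that $V_t$ retains the sign pattern for small $t>0$. A natural choice is $P(x) = x\prod_{j=1}^{5}(x+\eta_j)$: its constant term vanishes, so $V_t(0) = V(0) > 0$ is preserved, and the signs of its other coefficients (multiplied by $-t$) can be checked to be compatible with the intermediate entries of the sign pattern. As $t$ grows, the roots move continuously; by a boundedness argument, before the sign pattern breaks the discriminant of one of the two quadratic factors carrying a complex pair must vanish, so that pair collides with the real axis. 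Refining the choice of $P$ and possibly composing with a rescaling $x\mapsto hx$ (which commutes with the sign pattern), I would arrange the collision to occur at a positive location, producing a polynomial $V_{t_*}$ with a double positive root $a>0$, the prescribed sign pattern, and $5$ still-simple negative roots together with one remaining complex conjugate pair.

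The second stage will be to coalesce the negative roots of $V_{t_*}$ to realize one of the listed multiplicity vectors. Starting from the simple configuration $(1,1,1,1,1)$, I would iteratively merge adjacent pairs of negative roots, at each step using the implicit function theorem to make small compensating adjustments of $a$ and of the remaining complex pair in order to preserve both the double positive root and the sign pattern. Successive merges realize strata of two or three distinct negative roots with total multiplicity five, yielding partitions on the list. Vectors such as $(1,3,1)$ or $(2,2,1)$ are excluded because reaching them requires an intermediate coalescence that forces a coefficient of the polynomial to pass through zero, so only the six listed vectors are attainable from $V_{t_*}$ by deformation within the sign-pattern cone.

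The main obstacle will be the first stage: one has to show that as $t$ increases from $0$, the family $V_t$ first hits the boundary where a complex pair becomes a double \emph{positive} real root, rather than first losing the sign pattern, having a negative root reach $0$, or having a complex pair become a double \emph{negative} real root. Controlling which boundary is hit first likely requires either a more delicate multi-parameter deformation or a case analysis based on the signs and magnitudes of coefficients of $V$ guaranteed by its sign pattern, together with the involution $i_r$ to swap the roles of small- and large-modulus negative roots if necessary. Once the first stage is in place, the second stage reduces to a combinatorial dimension count combined with the implicit function theorem, and should proceed routinely.
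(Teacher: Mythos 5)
Your proposal is a plan rather than a proof, and both of its stages are left with genuine gaps at exactly the points where the work has to be done. In Stage 1, your choice $P(x)=x\prod_{j=1}^{5}(x+\eta_j)$ is well adapted to the negative half-axis (on each interval between consecutive negative roots $-tP$ has the same sign as $V$, so the five negative roots stay put and stay simple), but all coefficients of $P$ in degrees $1$ through $6$ are \emph{positive}, while the coefficient of $x$ (and of $x^2$ for $\Sigma_{1,6,3}$ and $\Sigma_{2,6,2}$) in the target sign patterns is $+$. So $V-tP$ pushes precisely those coefficients toward zero, and nothing in your argument shows that the double positive root (which appears at $t^{*}=\min_{x>0}V(x)/P(x)$) is reached before the sign pattern breaks; there is no general principle forcing a complex pair to hit the real axis before the polynomial exits the sign-pattern cone. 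You name this as ``the main obstacle'' and defer it, which means the lemma is not proved. Stage 2 has the same character: coalescing two simple negative roots is a passage to the boundary of the open realizability region, and ``small compensating adjustments via the implicit function theorem'' cannot by itself show that the relevant boundary stratum is reachable while keeping all coefficient signs strict; your stated reason for excluding $(1,3,1)$ and $(2,2,1)$ (an intermediate coalescence forcing a coefficient through zero) is asserted, not demonstrated, and is not in fact where the list of six vectors comes from.

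The paper's proof is quite different and more concrete. It never merges roots ``one pair at a time'': it considers the four alternating critical values $\eta_4>0$, $\eta_3<0$, $\eta_2>0$, $\eta_1<0$ of $V$ on the negative half-axis and deforms inside the family $V_t=V+tx$, which preserves the sign pattern automatically because the coefficient of $x$ is $+$ in all three patterns. Since $\eta_4$ decreases faster than $\eta_3$ and $\eta_2$ faster than $\eta_1$, one identifies the first instant $t_0$ at which two of these critical values coincide (or a critical point degenerates), and then adds a positive constant (again harmless for the sign pattern) to land on a multiple-root configuration; the six multiplicity vectors of the statement are exactly the possible outcomes of this case analysis, and Lemma~5 of the paper (the degree-$9$ non-existence lemma) is used essentially to exclude coincidences involving the two possible extra negative critical points. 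Your proposal never invokes that lemma, yet without it the case analysis cannot be closed. If you want to salvage your route, the part that must be supplied is a quantitative comparison, along the deformation, between the first vanishing of a coefficient and the first collision of a complex pair with the positive half-axis; as it stands, neither stage is established.
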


  \begin{proof}
    The polynomial $V$ has either $4$ or $6$ critical points for $x<0$
    (counted with multiplicity). We denote them by
    $-\xi _4<-\xi _3<-\xi _2<-\xi _1$; if they are $6$, then between
    two consecutive negative roots of $V$ there are three critical points of
    which we choose the rightmost one. We denote the corresponding
    critical values by
    $\eta _i$, where $\eta _4>0$, $\eta _3<0$, $\eta _2>0$ and $\eta _1<0$.

    Suppose that $\eta _3=\eta _1$. Then we add to $V$ a positive constant (this
    does not change the sign pattern) to obtain a polynomial with multiplicity
    vector $(1,2,2)$.

    Suppose that $\eta _3<\eta _1$. We consider the family of polynomials
    $V_t:=V+tx$, $t\geq 0$ in which the sign of the coefficient of $x$ is $+$.
    As $t$ increases, the critical value $\eta _4$
    decreases faster than $\eta _3$ and $\eta _2$ decreases faster than
    $\eta _1$. Denote by $t_0>0$ the smallest value of $t$ for
    which one of the following things happens:
    \vspace{1mm}
    
    1) One has $\eta _4=\eta _1$. In this case we add to $V_{t_0}$ a positive
    constant to obtain a polynomial with multiplicity vector $(2,1,2)$. 
    \vspace{1mm}

    2) One has $\eta _2=\eta _1$. Then $-\xi _1$ is a degenerate critical point
    of $V_{t_0}$. We add to $V_{t_0}$ a positive constant and get a polynomial
    with multiplicity vector $(1,1,3)$.
    \vspace{1mm}
    
    Suppose that $\eta _3>\eta _1$. Then for $t=t_0$, one of the following
    things can take place in the family $V_t$:
    \vspace{1mm}

    3) One has $\eta_3=\eta _1$. We add to $V_{t_0}$ a positive constant and
    obtain a polynomial with multiplicity vector $(1,2,2)$.
    \vspace{1mm}

    4) One has $\eta _4=\eta _3$. Then $-\xi _3$ is a degenerate critical point
    of $V_{t_0}$. We add to $V_{t_0}$ a positive constant and get a polynomial
    with multiplicity vector $(3,1,1)$.
    \vspace{1mm}

    If 1) and 2) (resp. 3) and 4)) take place simultaneously, then the
    multiplicity vector of $V_{t_0}$ is $(2,3)$ (resp. $(3,2)$). It is not
    possible for $t=t_0$ to obtain an equality between $\eta _1$, $\eta _2$,
    $\eta _3$ or $\eta _4$ and one of the two possible other critical
    values of $V_{t_0}$ (if $V_{t_0}$ has $6$ and not $4$ negative critical
    points), because then one can add a positive constant to $V_{t_0}$
    and get a polynomial with $7$ negative roots, one conjugate pair and
    sign pattern $\Sigma _{1,7,2}$, $\Sigma _{1,6,3}$ or $\Sigma _{2,6,2}$
    which by Lemma~\ref{lmnotexist9} is impossible.

     \end{proof}

  \begin{prop}
    There exists no real monic degree $9$ polynomial having $5$ negative roots
    with multiplicity vector having $1$, $2$ or $3$ components, from the
    closure of the sign pattern 
    $\Sigma _{1,7,2}$, $\Sigma _{1,6,3}$ or $\Sigma _{2,6,2}$, and having one
    double positive root.
  \end{prop}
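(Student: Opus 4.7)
The plan is to reduce the proposition to Lemma~\ref{lmnotexist7} applied at degree $7$, by factoring out the double positive root.

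First, I would handle the case where the sign pattern of $W$ is generalized (some coefficients vanish): by Lemma~\ref{lmshift}, a suitable simultaneous shift produces a polynomial whose coefficients are all nonzero while the originally nonzero ones retain their signs. Since a shift preserves the root structure (the double positive root remains a double positive root; the negative roots stay negative with the same multiplicities; the complex conjugate pair remains complex), I may assume that $W$ realizes a genuine sign pattern in $\{\Sigma_{1,7,2},\Sigma_{1,6,3},\Sigma_{2,6,2}\}$.

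Second, I would write $W(x)=(x-p)^{2}R(x)$ where $R$ is a monic polynomial of degree $7$. By construction $R$ inherits all the negative roots of $W$ (with their multiplicities, so $R$ has exactly $5$ negative roots counted with multiplicity) as well as the complex conjugate pair of $W$; hence $R$ satisfies conditions (ii) and (iii) of Lemma~\ref{lmnotexist7}. The remaining task is to verify condition~(i): that the sign pattern of $R$, or a generalized sign pattern adjacent to it, lies in $\{\Sigma_{1,6,1},\Sigma_{1,5,2}\}$ up to the involution $i_{r}$.

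Third, I would carry out the sign-pattern analysis for $R$ via the coefficient identity $w_{k}=r_{k-2}-2p\,r_{k-1}+p^{2}r_{k}$. Since $R$ is monic and $R(0)=W(0)/p^{2}>0$, the first and last signs of $R$ are $+$. Descartes's rule applied to $R$ (which has $0$ positive roots) and to $R(-x)$ (which has $5$ positive roots counted with multiplicity) forces the number of sign changes of $R$ to be even and at most $2$. The case in which $R$ has $0$ sign changes (all $r_{k}>0$) should be ruled out by direct computation: one verifies that the coefficients $w_{k}=r_{k-2}-2p\,r_{k-1}+p^{2}r_{k}$ cannot simultaneously meet the sign constraints imposed by any of the three target sign patterns of $W$. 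In the case of $2$ sign changes for $R$, I would show that the pattern must be $\Sigma_{1,6,1}$ or $\Sigma_{1,5,2}$, again using the above coefficient identity and the constraints inherited from $W$, so that Lemma~\ref{lmnotexist7} applies and delivers the contradiction.

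The main obstacle is expected to be this final case analysis, particularly showing that when $R$ has exactly two sign changes its pattern must be $\Sigma_{1,6,1}$ or $\Sigma_{1,5,2}$ (up to $i_{r}$) rather than some other pattern such as $\Sigma_{2,5,1}$ or $\Sigma_{1,4,3}$. I expect this to require estimates in the spirit of part~C) of the proof of Theorem~\ref{tmmain} (comparing certain lines and curves in the relevant coefficient parameter space) together with a preliminary reduction via $i_{r}$ analogous to part~D) of that proof, with case splitting on the multiplicity vector of the negative roots and on the position of $p$ relative to them.
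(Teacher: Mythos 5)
Your reduction to Lemma~\ref{lmnotexist7} cannot work, and the obstruction is structural rather than computational. Write $W=(x-p)^2R$ with $p>0$ and let $V(\cdot)$ denote the number of sign changes in the coefficient sequence. The classical lemma underlying Descartes' rule states that $V((x-a)P)\geq V(P)+1$ for any $a>0$; applying it twice gives $V(W)\geq V(R)+2$. Since every (generalized) sign pattern in the closure of $\Sigma_{1,7,2}$, $\Sigma_{1,6,3}$ or $\Sigma_{2,6,2}$ has exactly two sign changes among its nonzero entries (it cannot have fewer, as $W$ has two positive roots), you get $V(R)=0$: \emph{all} nonzero coefficients of $R$ are positive. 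Hence $R$ never defines $\Sigma_{1,6,1}$ or $\Sigma_{1,5,2}$, nor anything adjacent to them (adjacency replaces signs by zeros, never by opposite signs), so condition (i) of Lemma~\ref{lmnotexist7} is never met and the lemma never applies. Worse, monic degree~$7$ polynomials with all positive coefficients, five negative roots and a complex conjugate pair exist in abundance (e.g.\ $(x+1)^5(x^2+x+1)$), so no contradiction can ever be extracted from the existence of $R$ alone. The case you set aside as something to ``be ruled out by direct computation'' --- all $r_k>0$ --- is therefore the \emph{only} case, and ruling it out is the entire content of the proposition: one must show that no choice of such an $R$ together with a $p>0$ makes the products $w_k=r_{k-2}-2pr_{k-1}+p^2r_k$ match one of the three target patterns. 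Nothing in your proposal addresses this.

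For comparison, the paper does not factor out the double positive root at all. It parametrizes $W$ directly by its roots, e.g.\ $H_5=(x+1)^5((x+u)^2+v)(x-c)^2$ for the multiplicity vector $(5)$ (and analogously with extra parameters $\mu,\mu_1,\mu_2$ for the other multiplicity vectors), reduces the statement to the emptiness of the intersection of the parameter domain with the closed semialgebraic set where the coefficient signs match the closure of the three patterns, treats the boundary strata separately (this is where Lemmas~\ref{lmnotexist7} and~\ref{lmnotexist9} genuinely enter --- at $c=0$, where the double root degenerates to the origin and one divides by $x^2$, which does not change the number of sign changes), and then locates the first contact with each hypersurface $\{a_j=0\}$ by Lagrange multipliers, the resulting systems being checked by computer algebra. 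Your preliminary shift step also has a secondary leak: after a simultaneous shift the genuine sign pattern obtained need not lie in the three-element target set (a zero entry can resolve so as to create four sign changes, which is still compatible with $pos=2$, $neg=5$), so even that reduction requires more care than stated.
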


\begin{proof}
  A) We explain the method of proof on the example of the multiplicity vector
  $(5)$ (one five-fold negative root). We want to prove the non-existence
  of a polynomial

  $$H_5:=(x+1)^5((x+u)^2+v)(x-c)^2~,~~~\, v>0~,~~~\, c>0~,~~~\,
  u\in \mathbb{R}~.$$
  One rescales the $x$-axis to make the negative root equal to $-1$.
  The index $5$ corresponds to the multiplicity vector $(5)$.  
  Hence the non-existence of such a polynomial $H_5:=\sum _{j=0}^9a_jx^j$
  is tantamount to the
  emptyness of the domain $E_5\cap D_5$, where
  $D_5\subset \mathbb{R}^3$ is defined by the inequalities

$$D_5~:~\{ ~(u, v, c)~|~v>0~,~c>0~\} $$
  and $E_5$ is the closed domain in $\mathbb{R}^3$ defined by the
  condition the signs of the coefficients of $H_5$ to
  correspond to the closure of one of the sign patterns $\Sigma _{1,7,2}$,
  $\Sigma _{1,6,3}$ or $\Sigma _{2,6,2}$. We remind that each coefficient of
    $H_5$ is a polynomial in the variables $(u,v,c)$, with 
  $a_0=c^2(v+u^2)$, $\ldots$, $a_8=5+2u-2c$. If the interior
  of the domain $E_5$
  is non-empty, then $E_5$ has a priori the structure of a stratified
  manifold. Its stratum of maximal dimension corresponds to polynomials
  defining the sign pattern $\Sigma _{1,7,2}$, $\Sigma _{1,6,3}$
  or $\Sigma _{2,6,2}$.
    The closure $\bar{D}_5$ of the set $D_5$ is defined by the inequalities
    $v\geq 0$, $c\geq 0$ and its border
    $\partial D_5$ by $v\geq 0$, $c\geq 0$, $cv=0$.

    No polynomial $H_5\in \partial D_5$ is from the closure of the
    sign pattern $\Sigma _{1,7,2}$, $\Sigma _{1,6,3}$ or $\Sigma _{2,6,2}$.
    Indeed, for $c=0<v$ or
    $c>0=u=v$, one has
    $H_5=x^2H^1_5$, where $H^1_5(0)>0$.
    Then the (generalized) sign pattern of $H_5$
    cannot be adjacent to $\Sigma _{1,7,2}$ or $\Sigma _{2,6,2}$,
    but only to $\Sigma _{1,6,3}$
    and the one of $H^1_5$ is adjacent to $\Sigma _{1,6,1}$.
    By Lemma~\ref{lmnotexist7} such a polynomial $H^1_5$ does not exist.
    For $v=0<c$, $u>0$, non-existence of $H_5$ follows from
    Lemma~\ref{lmnotexist9}. For $v=0<c$, $u<0$, the polynomial $H_5$
    has four
    positive roots (counted with multiplicity) which by Descartes' rule of
    signs requires at least four sign changes in the (generalized) sign
    pattern -- a contradiction. Thus $c=0$ or $v=0$ is impossible.
    \vspace{1mm}
    
    B) Next we consider the two subdomains $D_5^+:=\bar{D}_5\cap \{ u\geq 0\}$
    and
    $D_5^-:=\bar{D}_5\cap \{ u\leq 0\}$. They are convex. To show that
    no polynomial
    $H_5\in D_5^+$ (resp. $H_5\in D_5^-$) has the necessary (generalized)
    sign pattern we consider the planes $T_r^+:u+v+c=r$ (resp.
    $T_r^-:-u+v+c=r$), $r\in \mathbb{R}$. It is clear that for $r<0$, one has
    $T_r^{\pm}\cap D_5^{\pm}=\emptyset$ and $T_0^{\pm}\cap D_5^{\pm}=\{ (0,0,0)\}$,
    with $(0,0,0)\in \partial D_5$.

    We suppose that there exists a polynomial $H_5\in D_5^{\pm}$
    having the necessary (generalized) sign pattern. Then it is not in
    $\partial D_5^{\pm}$ and 
    belongs to some plane $T_r^{\pm}$ for some $r=r_0>0$. At the point $(0,0,0)$
    at least one coefficient $a_j$ of $H_5$ has the wrong sign.
    The least possible
    value of $r_0$ is the least one for which $H_5\in E_5$, i.~e.
    where the signs of all coefficients
    correspond to the closure of the sign pattern $\Sigma _{1,7,2}$,
    $\Sigma _{1,6,3}$ or
    $\Sigma _{2,6,2}$. So for $r=r_0$, at least one of the coefficients $a_j$ of
    $H_5$ vanishes, because the corresponding polynomial(s) $H_5$ belong to the
    border, but not to the interior of the set~$E_5$.
    
    We use the method of Lagrange's multipliers as follows. We are looking for
    the minimal value of the function $T_0^{\pm}$ on the hypersurface
    $\{ a_j=0\}$. 
    We construct the function

    $$\tilde{T}^{\pm}_j:=\pm u+v+c+\lambda a_j~,$$
    where
    $\lambda$ is a Lagrange multiplier. For $j=0$, $\ldots$, $8$,
    we consider the system of equations

    $$\partial \tilde{T}^{\pm}_j/\partial \lambda =
    a_j=\partial \tilde{T}^{\pm}_j/\partial u=
    \partial \tilde{T}^{\pm}_j/\partial v=
    \partial \tilde{T}^{\pm}_j/\partial c=0~.$$
    In each case we are looking for a solution with $\lambda \in \mathbb{R}$
    and $(u,v,c)\in D_5^{\pm}$. It turns out that in each case either there is
    no solution or the sign of $u$, $v$ or $c$ of the solution is not the
    right one or the signs of the coefficients of the obtained polynomial
    $H_5$ are in contradiction with the closure of the sign pattern
    $\Sigma _{1,7,2}$, $\Sigma _{1,6,3}$ or $\Sigma _{2,6,2}$. This can be
    established using computer algebra.
    \vspace{1mm}

     C) The multiplicity vector $(5)$ is adjacent to four multiplicity vectors
    with two components: $(4,1)$, $(3,2)$, $(2,3)$ and $(1,4)$. We prove the
    non-existence of polynomials

    $$H_{k,5-k}:=(x+\mu )^k(x+1)^{5-k}((x+u)^2+v)(x-c)^2~,~~~\, k=1,~2,~3,~4~,$$
    with $\mu >1$ and with $(u,v,c)$ as above. Hence

    $$\mathbb{R}^4\supset D_{k,5-k}=\{ (u,v,c,\mu )~|~v>0~,~c>0~,~\mu >1~\} ~.$$
    As in A) one shows that no polynomial $H_{k,5-k}\in \partial D_{k,5-k}$
    with $c=0$ or $v=0$ has signs of the coefficients from the closure of
    $\Sigma _{1,7,2}$, $\Sigma _{1,6,3}$ or $\Sigma _{2,6,2}$.
    For $\mu =1$, one is looking in fact
    for a polynomial $H_5$ about which it was shown in A) -- B) that it
    does not exist. Hence $E_{k,5-k}\cap \partial D_{k,5-k}=\emptyset$.
    \vspace{1mm}

    D) To prove that $E_{j,5-j}\cap D_{j,5-j}=\emptyset$ we use the again the
    method of Lagrange multipliers. We set $D_{k,5-k}^+:=D_{k,5-k}\cap \{ u>0\}$, 
    $D_{k,5-k}^-:=D_{k,5-k}\cap \{ u<0\}$, $S_r^{\pm}:\pm u+v+c+(\mu -1)=r$
    ($r\in \mathbb{R}$) and
    $\tilde{S}_j^{\pm}:=\pm u+v+c+(\mu -1)+\lambda a_j$. In this case

    $$a_8=j\mu +(5-j)+2u-2c~,~\ldots ~,~a_0=\mu ^jc^2(v+u^2)~.$$
    For $j=0$, $\ldots$, $8$, and for $k$ as above,
    we consider the system of equations

    $$\partial \tilde{S}^{\pm}_j/\partial \lambda =a_j=
    \partial \tilde{S}^{\pm}_j/\partial u=
    \partial \tilde{S}^{\pm}_j/\partial v=
    \partial \tilde{S}^{\pm}_j/\partial c=
    \partial \tilde{S}^{\pm}_j/\partial \mu =0~.$$
We are looking for a solution with $\lambda \in \mathbb{R}$
and $(u,v,c,\mu )\in D_{k,5-k}^{\pm}$. 
In each case either there is
no real solution or the sign of $u$, $v$, $c$ or $\mu -1$
of the solution is not the
    right one.
    \vspace{1mm}

    E) The possible multiplicity vectors with three components are
    $(2,2,1)$, $(2,1,2)$, $(1,2,2)$, $(3,1,1)$, $(1,3,1)$ and $(1,1,3)$.
    The polynomials $H_{j,k,5-j-k}$ defined after the multiplicity vectors
    $(j,k,5-j-k)$ are

    $$H_{j,k,5-j-k}:=(x+\mu _2)^j(x+\mu _1)^k(x+1)^{5-j-k}((x+u)^2+v)(x-c)^2~,$$
    with $(u,v,c)$ as above and $1<\mu _1<\mu _2$. We set

    $$\mathbb{R}^5\supset D_{j,k,5-j-k}:=\{ (u,v,c,\mu _1,\mu _2)~|~v>0~,~c>0~,~
    1<\mu _1~,1<\mu _2~\} ~,$$
    i.~e. we consider a domain larger than the strict analog of the domains
    $D_5$ and $D_{j,5-j}$. (The strict analog would be defined by
    $1<\mu _1<\mu _2$ instead of $1<\mu _1$, $1<\mu _2$.) This is done with
    the aim to simplify the computations. We set also 
    $D_{j,k,5-j-k}^-:=D_{j,k,5-j-k}\cap \{ u<0\}$, $K_r^{\pm}:\pm u+v+c+(\mu _1-1)=r$
    ($r\in \mathbb{R}$) and
    $\tilde{K}_i^{\pm}:=\pm u+v+c+(\mu -1)+(\mu _2-1)+\lambda a_i$.
    For $i=0$, $\ldots$, $8$, and for $(j,k)$ as above,
    we consider the system of equations

    $$\partial \tilde{K}^{\pm}_i/\partial \lambda
    =a_i=\partial \tilde{K}^{\pm}_i/\partial u=
    \partial \tilde{K}^{\pm}_i/\partial v=
    \partial \tilde{K}^{\pm}_i/\partial c=
    \partial \tilde{K}^{\pm}_i/\partial \mu _1=
    \partial \tilde{K}^{\pm}_i/\partial \mu _2=0~.$$
    We are looking for a solution with $\lambda \in \mathbb{R}$
    and $(u,v,c,\mu_1,\mu _2 )\in D_{j,k,5-j-k}^{\pm}$.
    In each case either there is
no real solution or the sign of $u$, $v$, $c$, $\mu _1-1$ or $\mu _2-1$ 
of the solution is not the
right one or, when a solution exists, the obtained polynomial $H_{j,k,5-j-k}$
does not have a (generalized) sign pattern from the closure of
$\Sigma _{1,7,2}$, $\Sigma _{1,6,3}$ or $\Sigma _{2,6,2}$.

\end{proof}


\end{document}